\numberwithin{equation}{section}
\newcommand{\e}{\epsilon}
\newcommand{\de}{\delta}
\newcommand{\br}{\mathbb{R}}
\newcommand{\N}{\mathbb{N}}
\newcommand{\BZ}{\mathbb{Z}}
\newcommand{\ik}{\varphi}
\newcommand{\pa}{\partial}
\newcommand{\al}{\alpha}
\newcommand{\la}{\lambda}
\newcommand{\om}{\omega}
\newcommand{\coi}{C_0^{\infty}}
\newcommand{\ioi}{\int_0^{\infty}}
\newcommand{\be}{\begin{equation}}
\newcommand{\ee}{\end{equation}}
\newcommand{\on}{\upsilon}
\newcommand{\dd}{\text{d}}
\newcommand{\op}{\text{Op}}
\newcommand{\cha}{\check \alpha}
\newcommand{\chq}{\check q}
\newcommand{\cht}{\check t}
\newcommand{\chv}{\check v}
\newcommand{\chw}{\check w}
\newcommand{\chx}{\check x}
\newcommand{\chy}{\check y}
\newcommand{\hxi}{\hat  \xi}
\newcommand{\heta}{\hat  \eta}
\newcommand{\CG}{\mathcal G}
\newcommand{\BP}{\mathbb P}
\newcommand{\nrec}{N_\e^{\text{rec}}}
\newcommand{\frec}{f_\e^{\text{rec}}}
\newcommand{\Eb}{\mathbb E}
\def\bs#1\es{
    \begin{equation}\begin{split}
    #1
    \end{split}\end{equation}
}
\newcommand{\bma}{\begin{pmatrix}}
\newcommand{\ema}{\end{pmatrix}}
\newcommand{\us}{\mathcal U}
\newcommand{\vs}{\mathcal V}
\newcommand{\s}{\mathcal S}
\newcommand{\T}{\mathcal T}
\newcommand{\R}{\mathcal R}
\newcommand{\CF}{\mathcal F}
\newcommand{\I}{\mathcal I}
\newcommand{\CB}{\mathcal{B}}
\newcommand{\tsp}{\text{supp}\,}
\newcommand{\tb}{\tilde B}
\newcommand{\tsum}{\textstyle\sum}
\newtheorem{theorem}{Theorem}[section]
\newtheorem{lemma}[theorem]{Lemma}
\theoremstyle{definition}
\newtheorem{assumptions}[theorem]{Assumption}
\newtheorem{definition}[theorem]{Definition}
\begin{document}

\title[Noise in Iterative Reconstruction]{Local Characterization of Noise in Iterative Reconstruction of the Generalized Radon Transform}
\author[A Katsevich]{Alexander Katsevich$^1$}
\thanks{$^1$Department of Mathematics, University of Central Florida, Orlando, FL 32816 (Alexander.Katsevich@ucf.edu).}

\begin{abstract}
We study noise in iterative reconstruction from discrete noisy data of a generalized Radon transform in the plane. Our approach builds on Local Reconstruction Analysis (LRA), a framework for analyzing reconstructions at the native scale. We establish that the rescaled reconstruction error converges in distribution to a zero-mean Gaussian random field with explicitly computable covariance, providing a complete local characterization of noise in iterative reconstruction. Numerical experiments show strong agreement with the theoretical predictions. Combined with earlier deterministic results, our findings complete the analysis of iterative reconstruction at the native scale with respect to the two most fundamental limitations: the discreteness of the data and the presence of noise.
\end{abstract}

\keywords{Generalized Radon transform, discrete data, random noise, reconstruction error, Gaussian random field, iterative reconstruction}
\subjclass[2020]{44A12, 60G60, 65R32}

\maketitle

\section{Introduction}\label{sec:intro}

\subsection{Two strands of Local Reconstruction Analysis (LRA). Main results}
Let $\R$ denote the generalized Radon transform (GRT), which integrates a function $f$ defined on an open set $\us \subset \br^n$ (the image domain) over a family of surfaces parameterized by points in a set $\vs \subset \br^n$ (the data domain). The GRT data of $f$, denoted $\hat f(y) = (\R f)(y)$ for $y \in \vs$, are sampled on a rectangular grid $\{y_j\}_{j \in \mathbb Z^n}\subset\vs$ with step size proportional to a small parameter $\e > 0$. If the data are noisy, the measurements take the form
\be
g_j := (\R f)(y_j) + \on_j,\ j \in J := \{ j \in \BZ^n : y_j \in \vs \},
\ee
where $\{\on_j\}_{j\in J}$ are independent random variables.

We begin by considering reconstructions of the form $f_0 = \R^* \CB \hat f$, where $\R^*$ is the adjoint transform and $\CB$ is a pseudo-differential operator ($\Psi$DO). Let $\ik$ be a compactly supported, sufficiently smooth interpolation kernel, and let $g_\e$ denote the interpolant of the discrete data $\{g_j\}_{j\in J}$ (see \eqref{interp-data}). The reconstruction from discrete data is then $f_\e = \R^* \CB g_\e$. This is of filtered backprojection (FBP) type: interpolation and the application of $\CB$ constitute the filtering step, while $\R^*$ performs backprojection.

LRA, introduced by the author in \cite{Katsevich2017a}, provides a systematic framework for studying local properties of such reconstructions. The central idea is to derive an explicit asymptotic formula for $f_\e$ in an $\e$-neighborhood of a point $x_0 \in \us$. This local approximation serves as a surrogate reconstruction at the native $\sim\e$ scale: once available, it can be readily analyzed from any perspective -- edge smoothing, random-error covariance, aliasing, or feature detection -- without further algorithm-specific derivations.

Two main strands of LRA have been developed:
\begin{itemize}
\item In the \emph{deterministic setting}, the data are noise-free ($\on_j \equiv 0$), and one studies effects such as edge smoothing and aliasing artifacts in $f_\e$ \cite{Katsevich2021a, Katsevich2023a, Katsevich_2025_BV}.
\item In the \emph{stochastic setting}, by linearity, one may assume $f \equiv 0$ and analyze reconstructions from noise ($g_j = \on_j$). This yields a detailed description of the local reconstruction error caused solely by random noise \cite{AKW2024_1, katsevich2024a}.
\end{itemize}

Since exact inversion formulas exist for only a few GRTs, iterative reconstruction is often necessary. In a recent work by the author \cite{katsevich2025}, LRA was extended to iterative reconstruction for discrete GRT data in the plane. The reconstruction $f_\e$ is obtained as the solution of the quadratic minimization problem
\be\label{min pb 0}
f_\e = \text{argmin}_{f \in H_0^1(\us_b)} \Psi(f), \quad
\Psi(f) := \Vert \R f - g_\e \Vert_{L^2(\vs)}^2 + \kappa \e^{3} \Vert \pa_x f \Vert_{L^2(\us)}^2,
\ee
where $\kappa > 0$ is a fixed regularization parameter, $\us_b \Subset \us$ is a subdomain, and $H_0^1(\us_b)$ is the closure of $\coi(\us_b)$ in the Sobolev $H^1(\br^2)$ norm. The factor $\e^3$ in the Tikhonov term ensures that the reconstruction resolution remains at the native scale $\e$.

The analysis in \cite{katsevich2025} concerns the deterministic setting ($\on_j \equiv 0$). The main result is an accurate formula describing how sharp edges in $f$ are smoothed in $f_\e$. The present paper complements that work by addressing the stochastic setting. Since $\Psi$ is quadratic, the reconstruction depends linearly on the data. Thus it suffices to study the case $f \equiv 0$ and $g_j \equiv \on_j$. 
In this case the reconstructed image is denoted $\nrec(x)$. In the spirit of LRA, we focus on an $O(\e)$ neighborhood of a generic point $x_0 \in \us_b \subset \br^2$. 

Let $C(\bar D)$ denote the space of continuous functions on a bounded Lipschitz domain $D\subset \br^2$. The main result of this paper is that, under suitable assumptions on the first four moments of $\{\on_j\}_{j\in J}$ and on $x_0$, the following limit exists:
\be\label{noise rec 0}
N^{\text{rec}}(\chx; x_0) = \lim_{\e \to 0} N_\e^{\text{rec}}(x_0 + \e \chx), \quad \chx \in\bar D.
\ee
Here, $N^{\text{rec}}$ and $N_\e^{\text{rec}}$ are viewed as $C(\bar D)$-valued random variables (random fields), and the limit is understood in the sense of probability distributions. We also prove that $N^{\text{rec}}(\chx; x_0)$, $\chx\in \bar D$, is a zero-mean Gaussian random field (GRF) with explicitly computable covariance. Finally, we present numerical experiments demonstrating strong agreement between theoretical predictions and simulated reconstructions.

As in \cite{katsevich2025}, our emphasis is on the most fundamental limitation to image quality: discrete noisy data. Although numerical implementation requires finite-dimensional approximation, its effect can be made negligible compared to data discretization and noise, so our analysis isolates the intrinsic contribution of noise.

\subsection{Related Work} 
Classical analyses of noise in CT focus on analytic inversion schemes such as FBP. The book by Kak and Slaney \cite[Sec.~5.2]{kak2001a} derives explicit expressions for how additive measurement noise propagates through FBP, including the expectation and variance of the reconstructed image. A related treatment is given in \cite[Sec.~16.2]{eps08}. However, the resulting expressions are not available in closed form and are therefore difficult to use for detailed theoretical analysis. 

Kernel-type estimators for tomographic problems with additive noise were studied in \cite{Korostelev1991,Tsybakov_92}, where minimax rates were obtained both pointwise and in $L^2$. Related results on maximal deviations appear in \cite{Bissantz2014}, while \cite{cavalier1998,Cavalier2000} established pointwise asymptotic efficiency for density estimation from noise-free Radon data on random grids. A common feature of these works is the reliance on additional smoothing at a scale much larger than the sampling step, which entails loss of resolution and requires strong smoothness assumptions on $f$. None of them derive the distribution of the reconstruction error, whereas our analysis works at the native resolution and yields a complete local characterization of the error. 

Bayesian inversion has also been developed for tomographic settings \cite{MNP2019,siltanen2003,vanska2009}. For Gaussian white noise, \cite{MNP2019} showed asymptotic normality of the posterior and MAP estimator for smooth test functionals $\int f\psi$, i.e. reconstructions at macroscopic scales. By contrast, we quantify pointwise error uniformly over $O(\e)$ neighborhoods. 

Another perspective is semiclassical analysis \cite{stefanov2023}, which characterizes the empirical spatial mean and variance of reconstruction noise for a single experiment as $\e\to0$. Our approach differs by describing the entire limiting error distribution across repeated reconstructions. 

Applied studies (e.g. \cite{Noo_noise_2008,Divel2020}) have examined image-domain noise with numerical and semi-empirical methods. These works provide valuable heuristics but do not yield theoretical PDFs of the reconstructed noise in small neighborhoods. 

Finally, the existing work on stochastic LRA \cite{AKW2024_1, katsevich2024a} does characterize noise in the reconstructed image, but the analysis is confined to FBP-type reconstruction.

\subsection{Significance. Organization of the paper}
Empirical noise studies in tomography typically require many independent experiments: one generates noisy data, reconstructs, and then averages results to estimate statistics such as variance or correlation. For iterative methods, however, each reconstruction involves solving a large optimization problem, so repeating this process many times is computationally expensive.

The significance of this work is twofold. First, it eliminates the need for extensive empirical sampling. By extending LRA to the stochastic iterative setting, we obtain a rigorous description of the entire limiting distribution of the reconstruction error at the native scale. In effect, our analysis provides directly what empirical studies would reveal only after a prohibitive number of reconstructions. This advances the theoretical understanding of errors in iterative methods while also offering practical value: reliable predictions of reconstruction error without repeated costly simulations.

Second, this work broadens the scope of LRA itself. Until recently, LRA had been applied only to FBP-type algorithms, yet exact FBP inversion formulas exist for only a few transforms. By extending LRA to iterative reconstruction, we expand its applicability to the much wider class of GRTs for which only iterative methods are available. Taken together with the deterministic analysis in \cite{katsevich2025}, the present results complete the study of reconstruction at the native scale with respect to the two most fundamental phenomena: data discreteness and noise. In turn, this opens the way to applying additional analyses to iterative reconstruction, such as the statistical microlocal analysis recently developed for FBP reconstruction \cite{abhishek2025}.

The remainder of the paper is organized as follows. Section~\ref{sec:basic notation} introduces the basic notation. In Section~\ref{sec:setting} we describe the problem setup, state the assumptions, and present the iterative reconstruction algorithm. The main results, Theorems~\ref{thm:Lyapunov_nd} and~\ref{GRF_thm}, are given in Section~\ref{sec:main_res}. Properties of the reconstructed image are examined in Section~\ref{sec:reg_sol}. Subsection~\ref{ssec:CRT example} illustrates how our results apply to the classical Radon transform. The proofs of the main theorems are contained in Sections~\ref{sec:grid recon} and~\ref{sec:prf GRF}. Section~\ref{sec:numerics} reports numerical experiments that confirm our theoretical predictions. Finally, the proofs of auxiliary results are collected in Appendices~\ref{sec:eta props}--\ref{sec:tight}.

\section{Basic notation, function spaces, symbols, operators}\label{sec:basic notation}

Denote $\N=\{1,2,\dots\}$ and $\N_0=\{0\}\cup\N$. Let $U\subset\br^n$, $n\in\N$, be a domain, which is defined as a non-empty, connected, open set. For clarity, the zero vector in $\br^n$, $n\ge2$, is denoted $0_n$.

We identify tangent and cotangent spaces on Euclidean spaces with the underlying Euclidean spaces. Thus, for a function $f(x)$ defined on a domain $U\subset\br^n$ we have
\bs
&\pa_x f(x):=f_x^\prime(x):=(\pa_{x_1} f(x),\dots,\pa_{x_n}f(x))^T,\\ 
&\dd_x f=(\pa_{x_1} f(x),\dots,\pa_{x_n}f(x)),\\
&|\pa_x f(x)|=|\dd_x f(x)|=\big[(\pa_{x_1} f(x))^2+\dots+(\pa_{x_n}f(x))^2\big]^{1/2}.
\es
We use $f_x^\prime$ or $\pa_x f$ interchangeably depending on what is more convenient from the notational perspective. 


The Sobolev space $H^s(\br^n)$, $s\in\br$, is the space of all tempered distributions $f$ for which 
\be
\Vert f\Vert_{H^s(\br^n)}^2=(2\pi)^{-n}\int_{\br^n} |\tilde f(\xi)|^2(1+|\xi|^2)^s\dd\xi<\infty.
\ee
$H_0^s(U)$ is the closure of $\coi(U)$ in $H^s(\br^n)$.

\begin{definition}
Given a domain $U\subset\br^n$ and $r\in\br$, $S^r(U)$ denotes the vector-space of $C^\infty(U\times (\br^n\setminus0_n))$ functions, $\tilde B(x,\xi)$, having the following properties
\bs\label{symbols def}
&|\pa_x^m \tilde B(x,\xi)|\le c_m|\xi|^{-n+\de}\ \forall m\in\N_0^n,x\in U,0<|\xi|\le 1;\\
&|\pa_x^{m_1} \pa_\xi^{m_2}\tilde B(x,\xi)|\le c_{m_1,m_2}|\xi|^{r-|m_2|},\ \forall m_1\in\N_0^n,m_2\in\N_0^n,x\in U,|\xi|\ge 1;
\es
for some constants $\de>0$ and $c_m,c_{m_1,m_2}>0$. 
\end{definition}

The elements of $S^r$ are called symbols of order $r$. We modify the conventional definition slightly to allow for symbols to be non-smooth at the origin. This makes our analysis more streamlined, but otherwise has no effects. The space of the corresponding $\Psi$DOs, given by 
\be
(\op(\tilde B(x,\xi))f)(y):=\frac1{(2\pi)^n}\int_{\br^n}\tilde B(x,\xi)\tilde f(\xi)e^{-i\xi\cdot y}\dd\xi,\ 
f\in\coi(U),
\ee
is denoted $L^r(U)$. The notation $\us_b\Subset\us$ means that the closure $\overline{\us_b}$ is compact and $\overline{\us_b}\subset \us$.

%

For convenience, throughout the paper we use the following convention. If a constant $c$ is used in an equation, the qualifier ‘for some $c>0$’ is assumed. If several $c$ are used in a string of (in)equalities, then ‘for some’ applies to each of them, and the values of different $c$’s may all be different. 

Additional function spaces, notation, and conventions are introduced as needed.

\section{Setting of the problem, assumptions, and reconstruction algorithm}\label{sec:setting}

\subsection{GRT and its properties}\label{ssec:grt}
Consider a GRT, which integrates over the curves $\s_y:=\{x\in\us:p=\Phi(x,\al)\}$, $y:=(\al,p)\in \vs:=\I_\al\times\I_p$:
\bs\label{R def}
(\R f)(y)=& \int_{\us} f(x)W(x,y)\de(p-\Phi(x,\al))\dd x\\
=& \frac1{2\pi}\int_{\br} \int_{\us} f(x)W(x,y)e^{i\nu(p-\Phi(x,\al))} \dd x\dd\nu,\ y\in\vs,
\es
where $W\in C^\infty(\us\times\vs)$. An open set $\us\subset\br^2$ is the image domain, and $\vs=\I_\al\times \I_p$ is the data domain. Here and everywhere below we use the convention that whenever $y$, $\al$, and $p$ appear in the same equation or sentence, then $y=(\al,p)$.
\begin{assumptions}[Properties of the GRT -  I]\label{ass:Phi}$\hspace{1cm}$
\begin{enumerate}
\item\label{Ial} $\I_\al\subset\br$ is a closed interval with the endpoints identified, so it can be viewed as a circle,
\item $\I_p\subset\br$ is an open interval (or all of $\br$), and 
\item\label{Phi sm} $\Phi\in C^\infty(\us\times \I_\al)$ and $\Phi(\us\times \I_\al)\subset\I_p$. 
\end{enumerate}
\end{assumptions}
Both $\us$ and $\vs$ are endowed with the usual Euclidean metric. 

\begin{assumptions}[Properties of the GRT - II]\label{geom GRT}$\hspace{1cm}$
\begin{enumerate}
\item\label{li} $\dd_x\Phi(x,\al)$ and $\dd_x\Phi_\al^\prime(x,\al)$ are linearly independent on $\us\times \I_\al$.
\item\label{bolk} The Bolker condition: if $\Phi(x_1,\al)=\Phi(x_2,\al)$ for some $x_1,x_2\in\us$, $x_1\not=x_2$, and $\al\in \I_\al$, then $\Phi_\al^\prime(x_1,\al)\not=\Phi_\al^\prime(x_2,\al)$. 
\item\label{visib} For each $(x,\xi)\in T^*\us$, there exist $\al\in \I_\al$ and $\nu\in\br$ such that $\xi=\nu\dd_x\Phi(x,\al)$.
\item\label{pos W} $W\in C^\infty(\us\times\vs)$ and $W(x,y)>c$ on $\us\times\vs$.
\item\label{invert} For any domain $\us_b\Subset\us$ there exists $c=c(\us_b)$ such that
\be\label{inverse bound}
\Vert f\Vert_{H^{-1/2}(\br^2)}\le c \Vert \R f\Vert_{L^2(\vs)}\text{  for any  } f\in H_0^{-1/2}(\us_b).
\ee
\end{enumerate}
\end{assumptions}

Assumption~\ref{geom GRT}\eqref{li} implies that the curves $\s_y$ are smooth. See \cite{katsevich2025} for a more detailed discussion of these assumptions.

\subsection{GRT data and the reconstruction algorithm}\label{ssec:data_recon}

Realistic data can usually be represented as the sum of a useful signal (the GRT of some function, $\hat f=\R f$) and noise. Hence we assume that our data are $g(y_j)=\hat f(y_j)+\on_j$, where
\be\label{data-pts}
y_j=(\Delta\al j_1,\Delta p j_2)\in\vs,\ \Delta p=\e,\ \Delta\al=\mu\e,\ j=(j_1,j_2)\in J,
\ee
$\on_j$ represents random noise, and $J:=\{j\in\mathbb Z^2:y_j\in\vs\}$. Likewise, $J_\al:=\{j_1\in\mathbb Z:\al_{j_1}\in\I_\al\}$ and $J_p:=\{j_2\in\mathbb Z:p_{j_2}\in\I_p\}$. For simplicity, the dependence of $J$, $J_\al$, and $J_p$ on $\e$ is suppressed.

We assume that $f$ is a sufficiently regular function supported in $\us_b \Subset \us$. Since we study noise, the only property of $f$ needed here is that $\|\hat f\|_{L^{\infty}(\vs)} \le C$ for some \textit{known} constant $C$. For the complete set of assumptions on $f$ required for image resolution analysis, see \cite[Assumption~3.3]{katsevich2025}.

Let $\Eb\on$ denote the expectation of a random variable $\on$. We also use the shorthand notation $\bar\on=\Eb\on$.

\begin{assumptions}[Properties of noise $\on_j$]\label{ass:noise}
$\hspace{1cm}$ 
\begin{enumerate}
\item $\on_j$ are independent (but not necessarily identically distributed) random variables, 
\item\label{moments} One has
\be\label{noise var}
\Eb\on_j=0,\
\Eb\on_j^2=\sigma^2(y_j)\Delta\al,\
\Eb|\on_j|^3=O(\e^{3/2}),\
\Eb\on_j^4=O(\e^{3/2}),
\ee
where $\sigma\in L^\infty(\vs)$, and all the big-$O$ terms are uniform in $j\in J$ as $\e\to0$. 
\end{enumerate}
\end{assumptions}

Prior to reconstruction, we modify the data $g(y_j)$ by applying a hard-thresholding procedure: we keep $g(y_j)$ if $|g(y_j)|\le 2C$, and set $g(y_j)=0$ if $|g(y_j)|>2C$. This is equivalent to replacing the original noise $\on_j$ with a new noise $\eta_j$, defined as follows:
\be\label{new noise}
\eta_j:=
\begin{cases} 
\on_j,&\text{if }|g(y_j)|\le 2C\\
-\hat f(y_j),&\text{if }|g(y_j)|>2C,
\end{cases}
\ j\in J.
\ee
Thus, instead of $g(y_j)$, our data are $\hat f(y_j)+\eta_j$.

The proof of the following result is in Appendix~\ref{sec:eta props}.
\begin{lemma}\label{lem:eta props} The random variables $\eta_j$, $j\in J$, are independent and they satisfy
\bs\label{eta var}
&\text{If $\on_j=0$ for some $j\in J$, then $\eta_j=0$ for that $j$},\\
&\Eb\eta_j=O(\e^{3/2}),\
\Eb\eta_j^2=\big(\sigma^2(y_j)+O(\e^{1/2})\big)\Delta\al,\\
&\Eb|\eta_j-\bar\eta_j|^3=O(\e^{3/2}),
\es
where the big-$O$ terms are uniform in $j\in J$ as $\e\to0$. 
\end{lemma}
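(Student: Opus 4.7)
The plan is to decompose $\eta_j = \on_j\mathbf{1}_{A_j^c} - \hat f(y_j)\mathbf{1}_{A_j}$, where $A_j := \{|g(y_j)| > 2C\}$ is the thresholding event, and to bound everything in terms of the moments of $\on_j$ together with $\BP(A_j)$. Independence of the $\eta_j$ is immediate: each $\eta_j$ is a Borel function of the single variable $\on_j$, with $\hat f(y_j)$ deterministic. If $\on_j = 0$ then $|g(y_j)| = |\hat f(y_j)| \le C < 2C$, so $A_j$ does not occur and $\eta_j = \on_j = 0$, which gives the first conclusion.

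The core observation is that on $A_j$ the noise must be large: $|\on_j| \ge |g(y_j)| - |\hat f(y_j)| \ge C$. Combining this with the fourth-moment hypothesis $\Eb\on_j^4 = O(\e^{3/2})$ via Markov's inequality yields the key tail bound $\BP(A_j) \le C^{-4}\Eb\on_j^4 = O(\e^{3/2})$, uniformly in $j$. This estimate then propagates through H\"older's inequality to control the truncation errors. Specifically, starting from $\Eb\eta_j = -\Eb[\on_j\mathbf{1}_{A_j}] - \hat f(y_j)\BP(A_j)$ (using $\Eb\on_j = 0$), I would bound the first term by $(\Eb|\on_j|^3)^{1/3}\BP(A_j)^{2/3}$; the exponents $(3,3/2)$ are chosen precisely so that $\e^{1/2}\cdot\e = \e^{3/2}$. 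The second-moment formula follows similarly from Cauchy--Schwarz on $\Eb[\on_j^2\mathbf{1}_{A_j}]$, using $(\Eb\on_j^4)^{1/2}\BP(A_j)^{1/2} = O(\e^{3/2})$; factoring out $\Delta\al = \mu\e$ converts the $O(\e^{3/2})$ remainder into the claimed $O(\e^{1/2})\Delta\al$.

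For the third centered absolute moment, the elementary inequality $|a-b|^3 \le 4(|a|^3 + |b|^3)$ reduces the task to estimating $\Eb|\eta_j|^3$ and $|\bar\eta_j|^3$. The former is bounded by $\Eb|\on_j|^3 + |\hat f(y_j)|^3\BP(A_j) = O(\e^{3/2})$, and the latter is $O(\e^{9/2})$ by the mean estimate already established. All bounds on the $\on_j$-moments are uniform in $j$ by Assumption~\ref{ass:noise}\eqref{moments}, so the resulting bounds on the $\eta_j$-moments are also uniform.

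The only real subtlety is the bookkeeping of H\"older exponents: one must check that the tail probability $O(\e^{3/2})$ from the fourth moment combines with the given moment bounds (which are $O(\e)$ for the variance and $O(\e^{3/2})$ for the third and fourth moments) to produce the uniform $O(\e^{3/2})$ rate in every statement. No probabilistic tool beyond Markov and H\"older is needed, and I expect no other obstacle.
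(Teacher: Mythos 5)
Your proposal is correct and follows essentially the same route as the paper: both split $\eta_j$ according to the thresholding event, observe that $|\on_j|\ge C$ on that event, and trade low moments for the given third/fourth moment bounds to get the uniform $O(\e^{3/2})$ rates. The only (immaterial) difference is that you reach those bounds via Markov plus H\"older on $\BP(A_j)$, whereas the paper uses the pointwise inequality $|x|^k\le C^{k-m}|x|^m$ directly on the tail integral $\int_{|x|\ge C}$; both yield identical estimates.
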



The interpolated data, denoted $g_\e(y)$, is computed by: 
\bs\label{interp-data}
&g_\e(y):=\sum_{y_j\in\vs} \ik_\e(y-y_j)(\hat f(y_j)+\eta_j),\ y\in\vs,\\
&\ik_\e(y):=\ik(y/\e),\ \ik(y):=\ik_\al(\al/\mu)\ik_p(p),
\es
where $\ik$ is an interpolation kernel. 
\begin{assumptions}[Properties of the interpolation kernel, $\ik$]\label{ass:interp ker} 
One has
\begin{enumerate}
\item\label{ikcont} $\ik_\al$ and $\ik_p$ are compactly supported, $\ik_\al\in L^\infty(\br)$, and $\ik_p^{(3)}\in L^\infty(\br)$.
\item\label{ikeven} $\ik_*$ is even: $\ik_*(u)=\ik_*(-u)$, $u\in\br$, $*=\al,p$.
\end{enumerate}
\end{assumptions}

Given that the set $\Phi(\us_b,\I_\al)$ is bounded and the modified data $g_\e$ are bounded due to the hard thresholding procedure, it follows that $\Vert g_\e\Vert_{L^2(\vs)}\le c$, $0<\e\ll1$, with probability 1. The reconstruction is achieved by minimizing the quadratic functional
\be\label{min pb}
f_\e=\text{argmin}_{f\in H_0^1(\us_b)}\Psi(f),\ 
\Psi(f):=\Vert \R f-g_\e \Vert_{L^2(\vs)}^2+\kappa \e^{3}\Vert \pa_x f\Vert_{L^2(\us)}^2,
\ee
with some fixed regularization parameter $\kappa>0$. \cite[Lemma 3.5]{katsevich2025} yields the following result.

\begin{lemma}\label{lem:unique sol} Suppose Assumptions~\ref{ass:Phi}--
\ref{ass:interp ker} are satisfied. With probability 1, the solution to \eqref{min pb} exists and is unique for each $0<\e\ll1$.
\end{lemma}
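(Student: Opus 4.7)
The plan is to reduce the probabilistic claim to a deterministic well-posedness question and then invoke the direct method of the calculus of variations. Observe that the randomness enters \eqref{min pb} only through $g_\e$, which is a bounded $L^2(\vs)$ function with probability $1$ by the hard-thresholding construction \eqref{new noise} combined with the boundedness of $\hat f$ (as already noted in the paragraph preceding \eqref{min pb}). Hence it suffices to prove the following: for every fixed $g_\e\in L^2(\vs)$, the functional $\Psi$ admits a unique minimizer in $H_0^1(\us_b)$.

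To prove this deterministic statement, I would decompose $\Psi(f)=Q(f,f)-2L(f)+\Vert g_\e\Vert_{L^2(\vs)}^2$, where the quadratic form $Q(f,f):=\Vert \R f\Vert_{L^2(\vs)}^2+\kappa\e^3\Vert\pa_x f\Vert_{L^2(\us)}^2$ is nonnegative and $L(f):=\langle \R f,g_\e\rangle_{L^2(\vs)}$ is a bounded linear functional on $H_0^1(\us_b)$ (using continuity of $\R:H_0^1(\us_b)\to L^2(\vs)$, which is immediate from the smoothness of $W$ and $\Phi$ plus Assumption~\ref{geom GRT}). Because $\us_b$ is bounded and $f\in H_0^1(\us_b)$, the Poincar\'e inequality yields $\Vert\pa_x f\Vert_{L^2(\us)}\ge c\Vert f\Vert_{L^2(\us)}$, so for each fixed $\e>0$ one gets $Q(f,f)\ge c_\e\Vert f\Vert_{H^1(\br^2)}^2$. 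This makes $\Psi$ strictly convex, continuous, and coercive on $H_0^1(\us_b)$.

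The direct method then concludes the proof: a minimizing sequence $\{f_n\}$ is bounded in $H_0^1(\us_b)$ by coercivity, so along a subsequence $f_n\rightharpoonup f^*$ weakly in this reflexive Hilbert space; weak lower semicontinuity of the convex continuous functional $\Psi$ gives $\Psi(f^*)\le\liminf_n\Psi(f_n)=\inf\Psi$, establishing existence; strict convexity supplies uniqueness. Combining with the probability-$1$ statement about $g_\e$ gives the lemma. The only point requiring any attention is verifying that $\R$ indeed maps $H_0^1(\us_b)$ continuously into $L^2(\vs)$, which is the main place where Assumptions~\ref{ass:Phi}--\ref{geom GRT} enter, but this is standard and was already worked out in \cite[Lemma~3.5]{katsevich2025}; there is no substantive obstacle beyond that book-keeping.
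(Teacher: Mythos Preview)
Your proposal is correct and is essentially the argument that the paper defers to: the paper does not prove the lemma in-text but simply invokes \cite[Lemma~3.5]{katsevich2025}, and your direct-method argument (reduction to deterministic $g_\e\in L^2$, coercivity via Poincar\'e, strict convexity, weak lower semicontinuity) is exactly the standard proof one expects that cited lemma to contain. Nothing further is needed.
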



The minimization problem in \eqref{min pb} is quadratic, hence the reconstructed image depends linearly on input data (see also \eqref{reg main eq} below). More precisely, we talk here about the modified data in \eqref{interp-data}. Therefore, in the rest of the paper we assume that $\hat f(y_j)\equiv0$, and the data consist only of noise: $g(y_j)=\eta_j$, $j\in J$. Thus,
\be\label{interp-noise}
g_\e(y)=\tsum_{j\in J} \ik_\e(y-y_j)\eta_j,\ y\in\vs.
\ee
The corresponding reconstructed image is denoted $\nrec$. 

We study $\nrec(x)$ in an $\e$-neighborhood of a point $x_0$, which is assumed to satisfy the following conditions:
\begin{assumptions}[Properties of $x_0$]\label{ass:x0} 
\hspace{1cm}
\begin{enumerate} 
\item\label{Phi prpr} The equation $\Phi_{\al\al}^{\prime\prime}(x_0,\al)=0$, $\al\in\I_\al$, has finitely many solutions. 
\item\label{sig cont} $\sigma(y)$ is Lipschitz continuous in a neighborhood of the set 
\be\label{Gamma x0}
\Gamma_{x_0}:=\{y\in\vs:x_0\in\s_y\}.
\ee
\item\label{Ial_aux} There exists an open set $I_\al\in\I_\al$ such that
\be
\sigma(\al,\Phi(x_0,\al))\not=0\  \forall \al\in I_\al.
\ee
\end{enumerate}
\end{assumptions}

\section{Main results}\label{sec:main_res}

Pick any $L\in\N$ and choose $L$ distinct points $\chx_1,\dots,\chx_L\in\br^n$. The corresponding reconstructed vector is 
\be\label{Nvec}
\vec N_\e^{\text{rec}}:=(\nrec(x_0+\e\chx_1),\dots,\nrec(x_0+\e\chx_L))\in\br^L.
\ee
For simplicity, the dependence of $\vec N_\e^{\text{rec}}$ on $x_0$ and $\chx_l$ is suppressed from notation. We remind the reader the notion of weak convergence of random variables.

\begin{definition}[{\cite[p. 185]{Khoshnevisan2002}}]
Suppose $X_n,\ 1\leq n\leq \infty$, are random variables. Then $X_n$ converges to $X_{\infty}$ weakly, which is denoted $X_n\Rightarrow X_{\infty}$, as $n\to\infty$ if the distribution of $X_n$ converges to that of $X_\infty$.
\end{definition}

\begin{theorem} \label{thm:Lyapunov_nd} Let $x_0\in\us$ and $\chx_l\in\br^n$, $l=1,2,\dots,L$, be fixed. Suppose Assumptions~\ref{ass:Phi}, \ref{geom GRT}, \ref{ass:noise}, \ref{ass:interp ker}, and \ref{ass:x0} are satisfied. Suppose $f$ is compactly supported and $\Vert\R f\Vert_{L^\infty(\vs)}\le C$ for some known $C$. The limit $\vec N^{\text{rec}}:=\lim_{\e\to0}\vec N_\e^{\text{rec}}$ exists in the sense of weak convergence and is a Gaussian random vector.
\end{theorem}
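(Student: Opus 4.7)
The plan is to represent $\vec N_\e^{\text{rec}}$ as a deterministic linear functional of the independent noise variables $(\eta_j)_{j\in J}$ and then invoke the multivariate Lyapunov central limit theorem through the Cram\'er--Wold device. Since the quadratic problem \eqref{min pb} depends linearly on its data, there is a deterministic kernel $K_\e$ such that
\be\label{plan-lin-rep}
\nrec(x) = \tsum_{j\in J} K_\e(x; y_j)\,\eta_j,
\ee
with $K_\e$ obtained by composing the interpolation kernel $\ik_\e$ from \eqref{interp-data} with the regularized solution operator of the normal equation $(\R^*\R+\kappa\e^3 \pa_x^*\pa_x)\,f_\e = \R^* g_\e$ on $H_0^1(\us_b)$. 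Setting $\vec K_\e^{(j)}:=(K_\e(x_0+\e\chx_1;y_j),\dots,K_\e(x_0+\e\chx_L;y_j))^T$ yields $\vec N_\e^{\text{rec}} = \sum_{j\in J} \vec K_\e^{(j)}\eta_j$, an independent sum in $\br^L$.

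By the Cram\'er--Wold device, it suffices to show that for every $\vec a\in\br^L$ the scalar projection $Z_\e(\vec a):=\vec a\cdot\vec N_\e^{\text{rec}} = \sum_{j\in J} b_j(\e)\eta_j$, with $b_j(\e):=\vec a\cdot\vec K_\e^{(j)}$, converges weakly to a centered Gaussian. Splitting $\eta_j = \bar\eta_j + (\eta_j-\bar\eta_j)$, the deterministic part $\sum_j b_j(\e)\bar\eta_j$ is $o(1)$ because $\bar\eta_j=O(\e^{3/2})$ by Lemma~\ref{lem:eta props} while only $O(\e^{-1})$ indices contribute nontrivially, and the remainder is an independent centered array. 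Lyapunov's CLT then applies once we verify
\be\label{plan-lyap-cond}
\frac{\sum_{j\in J}|b_j(\e)|^3\,\Eb|\eta_j-\bar\eta_j|^3}{\Bigl(\sum_{j\in J} b_j(\e)^2\,\Eb(\eta_j-\bar\eta_j)^2\Bigr)^{3/2}}\ \longrightarrow\ 0
\ee
and identify the limit $s_{\vec a}^2 := \lim_{\e\to 0}\sum_{j\in J} b_j(\e)^2\,\Eb(\eta_j-\bar\eta_j)^2$.

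Both tasks reduce to sufficiently precise $\e$-asymptotics of the coefficients $b_j(\e)$, which come from the $\Psi$DO analysis of $(\R^*\R+\kappa\e^3 \pa_x^*\pa_x)^{-1}$ developed in Section~\ref{sec:reg_sol}, following \cite{katsevich2025}. Using Assumption~\ref{geom GRT} -- in particular the Bolker condition and visibility -- one localizes $K_\e(x_0+\e\chx_l;y_j)$ to grid nodes $y_j$ in an $O(\e)$ tube around the one-dimensional set $\Gamma_{x_0}$ of \eqref{Gamma x0}. A Riemann-sum argument then converts the denominator of \eqref{plan-lyap-cond} into an explicit integral against $\sigma^2(\al,\Phi(x_0,\al))\,\dd\al$, which is strictly positive by Assumption~\ref{ass:x0}\eqref{Ial_aux}, while the numerator gains an extra factor of $\e^{1/2}$ thanks to the bound $\Eb|\eta_j-\bar\eta_j|^3=O(\e^{3/2})=\e^{1/2}\cdot O(\Eb(\eta_j-\bar\eta_j)^2)$ from Lemma~\ref{lem:eta props}. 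The principal obstacle is exactly the coefficient analysis: producing these asymptotics uniformly in $j\in J$ and in the test points $\chx_l$. One must control the regularized inverse in the presence of the anisotropic sampling $(\Delta\al,\Delta p)=(\mu\e,\e)$, track the $H_0^1(\us_b)$ boundary contributions, and invoke Assumption~\ref{ass:x0}\eqref{Phi prpr} to exclude tangency parameters $\al$ with $\Phi_{\al\al}^{\prime\prime}(x_0,\al)=0$; at such points the mass of $b_j(\e)$ would concentrate on too few summands and invalidate \eqref{plan-lyap-cond}.
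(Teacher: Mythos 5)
Your overall route---writing $\nrec(x_0+\e\chx)=\sum_{j\in J}K_\e(x_0+\e\chx;y_j)\eta_j$, reducing to scalar projections via Cram\'er--Wold, disposing of the mean using $\bar\eta_j=O(\e^{3/2})$, and applying Lyapunov's CLT with a Riemann-sum identification of the limiting variance---is exactly the paper's (Section~\ref{sec:grid recon} and Appendix~\ref{sec:variance}). The issue is that the two places where the real work sits are asserted rather than established. First, the uniform coefficient asymptotics: the paper's Lemma~\ref{lem:Feps approx} shows that $K_\e(x_0+\e\chx;y_j)=G(x_0,\al_{j_1},r_{j_1}-j_2)+R_j$ with $|R_j|\le c\e\ln(1/\e)$, obtained from a parametrix for $\R^*\R-\kappa\e^3\Delta$, the rescaled symbol limit \eqref{R*R D0}, and the kernel decay estimates \eqref{B0 bnds}; the resulting bound $|G(\al,\chq)|\le c(1+|\chq|)^{-2}$ is what makes both the variance limit \eqref{lim De} and the Lyapunov ratio \eqref{main lim Lnd} computable. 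You correctly name this as the principal obstacle and point to Section~\ref{sec:reg_sol}, but you do not close it, and essentially all of the paper's technical effort (Appendix~\ref{sec:Feps}) lives there.

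Second, and this is a gap you do not flag: for $L>1$ the strict positivity of the limiting variance of $\vec a\cdot\vec N_\e^{\text{rec}}$ for every $\vec a\ne 0_L$ does \emph{not} follow from Assumption~\ref{ass:x0}\eqref{Ial_aux} alone. The limit is $\int_{\I_\al}\int_\br\big(\sum_l a_l G(\al,r+\dd_x\Phi(x_0,\al)\cdot\chx_l)\big)^2\dd r\,\sigma^2(\al,\Phi(x_0,\al))\dd\al$, and one must rule out that the linear combination of shifted copies of $G$ vanishes identically on the set where $\sigma\ne0$. The paper does this in Appendix~\ref{ssec:theta zero} by combining the analyticity of $\tilde\vartheta$ (since $\vartheta$ is compactly supported), the nonvanishing of $W$ and $\tb_0$, and a Vandermonde argument after choosing $\al_0$ so that the numbers $\dd_x\Phi(x_0,\al_0)\cdot\chx_l$ are pairwise distinct, which in turn relies on Assumption~\ref{geom GRT}\eqref{li}. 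Without this step the Cram\'er--Wold limit could be a degenerate (zero-variance) projection and the Lyapunov ratio would have an undefined denominator. A minor further correction: Assumption~\ref{ass:x0}\eqref{Phi prpr} is not there to prevent ``mass concentrating on too few summands''; it guarantees equidistribution of the fractional parts of $\Phi(x_0,\al_{j_1})/\e$ so that the Riemann sum defining $D_\e$ actually converges to the integral in \eqref{lim De}.
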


The theorem is proven in Section~\ref{sec:grid recon}. Before we state our next result, we remind the reader the definition of a random field (also known as a random function or topological space-valued random variable).

\begin{definition}\cite[p. 182]{Khoshnevisan2002}
Let $T$ be a topological space, endowed with its Borel field $\CB(T)$. A $T$-valued random variable $X$ on the probability space $(\Omega,\CG,\BP)$ is a measurable map $X:\Omega\to T$. In other words, for all $E\in\CB(T)$, $\{\om\in\Omega:X(\om)\in E\}\in\CG$.
\end{definition}

Let $D\subset R^n$ be a bounded Lipschitz domain. 
Define $C:=C(\bar D,\br)$ to be the collection of all functions continuous up to the boundary $f:\bar D\to\br$ metrized by
\be\label{Cmetric}
d(f,g)=\max_{\chx\in D}|f(\chx)-g(\chx)|,\ f,g\in C.
\ee

Recall that $N(x)$, $x\in \bar D$, is a GRF if $(N(x_1),\cdots,N(x_L))$ is a Gaussian random vector for any $L\ge1$ and any collection of points $x_1,\cdots,x_L\in \bar D$ \cite[Section 1.7]{adler2010}. As is known, a GRF is completely characterized by its mean $\bar G(x)$ and covariance function $\text{Cov}(x,y)=\Eb\big[(N(x)-\overline{N}(x))(N(y)-\overline{N}(y))\big]$, $x,y\in\bar D$ \cite[Section 1.7]{adler2010}. Thus, Theorem~\ref{thm:Lyapunov_nd} implies that $N^{\text{rec}}(\chx)$, $\chx\in \bar D$, is a zero mean GRF. 
For simplicity, we drop the dependence of $N^{\text{rec}}$ and its covariance function on $x_0$ from notation.

%

In the next theorem, we show that $\nrec(x_0+\e\check x)\Rightarrow N^{\text{rec}}(\chx)$, $\e\to0$, as $C$-valued random variables (\cite[p. 185]{Khoshnevisan2002}). 

\begin{theorem}\label{GRF_thm}
Let $D$ be a bounded domain with a Lipschitz boundary. Suppose the assumptions of Theorem~\ref{thm:Lyapunov_nd} hold. Let the function $G$ be defined as in \eqref{G def}. Then $\nrec(x_0+\e \chx)\Rightarrow N^{\text{rec}}(\chx)$, $\chx\in \bar D$, $\e\to0$, as GRFs, i.e. in the sense of weak convergence of $C$-valued random variables. Furthermore, $N^{\text{rec}}(\chx)$, $\chx\in \bar D$, is a GRF with zero mean and covariance
\bs\label{Cov main}
&\text{Cov}(\chx,\chy)
=C(\chx-\chy),\\
&C(\chx):=\int_{\I_\al}(G\star G)\big(x_0,\al,\dd_x\Phi(x_0,\al)\cdot \chx\big)\sigma^2(\al,\Phi(x_0,\al))\dd \al,\\
&(G\star G)(x_0,\al,p):=\int_\br G(x_0,\al,p+t)G(x_0,\al,t)\dd t,
\es
and sample paths of $N^{\text{rec}}(\chx)$ are continuous with probability $1$.
\end{theorem}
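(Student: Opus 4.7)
The plan is to prove weak convergence in $C(\bar D)$ by combining the two classical ingredients: convergence of finite-dimensional distributions (fidis) and tightness of the laws of the rescaled random field on $C(\bar D)$. The fidis part is essentially free: for any $L\ge1$ and any $\chx_1,\dots,\chx_L\in\bar D$, Theorem~\ref{thm:Lyapunov_nd} already yields $\vec N_\e^{\text{rec}}\Rightarrow \vec N^{\text{rec}}$, a zero-mean Gaussian vector. So what remains is to (i) identify the limiting covariance and write down $\text{Cov}(\chx,\chy)$ in the closed form \eqref{Cov main}, (ii) establish tightness in $C(\bar D)$, and (iii) combine the two to upgrade weak fidis convergence to weak convergence as $C$-valued random variables, which automatically gives a version with continuous sample paths.

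For the covariance, I would use the asymptotic representation of $\nrec(x_0+\e\chx)$ that underlies the proof of Theorem~\ref{thm:Lyapunov_nd}. One expects a formula of the shape $\nrec(x_0+\e\chx)\approx\sum_{j\in J}\eta_j\,K_\e(\chx,y_j)$, where the effective kernel $K_\e$ is built from the function $G$ of \eqref{G def} localized near $\Gamma_{x_0}$. Independence of the $\eta_j$ (Lemma~\ref{lem:eta props}) and the variance estimate $\Eb\eta_j^2=(\sigma^2(y_j)+o(1))\Delta\al$ then give
\bs
\text{Cov}\big(\nrec(x_0+\e\chx),\nrec(x_0+\e\chy)\big)=\sum_{j\in J}K_\e(\chx,y_j)K_\e(\chy,y_j)\,\Eb\eta_j^2.
\es
Changing variables $y_j=(\al,\Phi(x_0,\al)+\e t)$, converting the $p$-sum to an integral in $t$ (this produces the convolution $G\star G$), and using the expansion $\Phi(x_0+\e\chx,\al)-\Phi(x_0+\e\chy,\al)=\e\,\dd_x\Phi(x_0,\al)\cdot(\chx-\chy)+O(\e^2)$ converts the double sum into the advertised integral over $\I_\al$ weighted by $\sigma^2(\al,\Phi(x_0,\al))$. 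The vanishing of the mean follows from $\Eb\eta_j=O(\e^{3/2})$ together with a counting argument for the number of terms that contribute.

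The hard part is tightness. By the standard criterion for $C(\bar D)$-valued processes, it suffices to establish a Kolmogorov--Chentsov-type moment bound
\bs
\Eb\big|\nrec(x_0+\e\chx)-\nrec(x_0+\e\chy)\big|^{p}\le c\,|\chx-\chy|^{n+\de},\quad \chx,\chy\in\bar D,
\es
uniformly in $0<\e\ll1$, for some $p\ge 2$ and $\de>0$. The subtlety is that $\nrec$ is defined \emph{implicitly} as the minimiser of the variational problem \eqref{min pb}, not as an explicit linear filter, so one cannot simply differentiate a closed-form kernel. Instead, one must propagate the smoothness of $G$ in its first slot through the asymptotic representation used for the covariance, exploit Assumption~\ref{ass:interp ker} on $\ik$, and invoke the fourth-moment bound $\Eb|\eta_j-\bar\eta_j|^{4}=O(\e^{3/2})$ (cf. Lemma~\ref{lem:eta props} and Assumption~\ref{ass:noise}\eqref{moments}) to control the variance of the relevant Gaussian-like increment by $|\chx-\chy|^2$ up to constants, after which Burkholder/Rosenthal-type inequalities for sums of independent bounded random variables raise the exponent above $n=2$. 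This is exactly the kind of estimate deferred to Appendix~\ref{sec:tight}, and it is where I expect to spend the most effort; the non-local, PDE-type dependence of $\nrec$ on the noise means the spatial regularity must be extracted from the symbolic structure of the normal operator rather than read off directly.

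With fidis in hand from Theorem~\ref{thm:Lyapunov_nd} and tightness from the bound above, weak convergence in $C(\bar D)$ follows from the standard Prokhorov-plus-fidis argument (e.g.\ \cite[Ch.~1]{adler2010}). The limit is therefore supported on $C(\bar D)$, hence has continuous sample paths with probability $1$; it is Gaussian because all its fidis are, zero-mean by the computation above, and has covariance given by \eqref{Cov main}. This completes the proof plan.
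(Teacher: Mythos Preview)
Your high-level architecture matches the paper: convergence of finite-dimensional distributions from Theorem~\ref{thm:Lyapunov_nd}, tightness in $C(\bar D)$, then a Prokhorov-type argument (the paper cites \cite[Prop.~3.3.1]{Khoshnevisan2002}); the covariance computation also tracks the paper's, which reads it off from the variance calculation already done in the proof of Lemma~\ref{lem:Lyapunov_nd} (see \eqref{recon dttpr 4}).

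The genuine difference is in how tightness is obtained. The paper does \emph{not} use Kolmogorov--Chentsov. Instead it proves a uniform bound $\Eb\Vert \nrec(x_0+\e\,\cdot)\Vert_{W^{2,2}(D)}^2\le c$ by re-running the proof of Lemma~\ref{lem:Feps approx} for $\pa_{\chx}^m\nrec$, $|m|\le 2$, and then invokes the compact embedding $W^{2,2}(D)\hookrightarrow C(\bar D)$ plus Markov's inequality. This needs only \emph{second} moments of the coefficients (no Rosenthal/Burkholder), but requires two $\chx$-derivatives of the full representation, including the remainder---this is exactly why Assumption~\ref{ass:interp ker} asks for $\ik_p^{(3)}\in L^\infty$. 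Your route via increment moments is viable (after thresholding the $\eta_j$ are bounded, so Rosenthal gives any $p$), but note that Lemma~\ref{lem:Feps approx} only supplies a pointwise bound $|R_j|\le c\e\ln(1/\e)$, not a Lipschitz bound in $\chx$; to control increments you would still have to reopen that proof, at which point the work is comparable. The paper's Sobolev approach trades your higher-moment bookkeeping for one extra derivative and avoids the increment algebra altogether.
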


The theorem is proven in Section~\ref{sec:prf GRF}.

\section{Regularized solution}\label{sec:reg_sol}

\subsection{Equation for the solution to \eqref{min pb}}
Let $\R^*$ be the adjoint of $\R:L^2(\us)\to L^2(\vs)$:
\bs\label{Radj def}
(\R^* g)(x)=& \int_{\vs} g(y)W(x,y)\de(p-\Phi(x,\al)) \dd y\\
=&\int_{\I_\al} g(\al,\Phi(x,\al))W(x,(\al,\Phi(x,\al)))\dd \al,\ x\in\us.
\es
Assumption~\ref{geom GRT} ensures that $\R^*\R$ is an elliptic $\Psi$DO \cite{qu-80, Holm24}, \cite[Section VIII.6.2]{trev2} of order -1 in $\us$, i.e. $\R^*\R\in L^{-1}(\us)$. 

As is shown in \cite{katsevich2025}, the solution to \eqref{min pb} is the unique solution to the following equation 
\be\label{reg main eq}
(\R^*\R-\kappa\e^3\Delta)\nrec=\R^*g_\e \text{ in } H^{-1}(\us_b),\ \nrec\in H_0^1(\us_b).
\ee
Here we use that $(H_0^1(\us_b))^* = H^{-1}(\us_b)$; see \cite[Chapter~4, Proposition~5.1]{taylor2011}. Recall that elements of $H^s(\Omega)$ (for a domain $\Omega$) can be viewed as equivalence classes of functions in $H^s(\br^n)$, where two functions are identified if they agree on $\Omega$ \cite[Section~1.6.1]{egs}. We also use that $C^\infty(\overline{\Omega})$ is dense in $H^{-s}(\Omega)$ for $s<0$ \cite[Exercise~7, Section~4.5]{taylor2011}.

Recall that we write $\nrec$ instead of $\frec$ because $g(y_j)=\eta_j$, $j\in J$. 

\begin{lemma}\label{lem:bdd sol} Let $\psi_\e[h]$ be the solution to \eqref{min pb} (equivalently, \eqref{reg main eq}) with $g_\e$ replaced by $h\in L^2(\vs)$. One has 
\be\label{bdd sol}
\Vert \psi_\e[h] \Vert_{H^{-1/2}(\us)}\le c\Vert h\Vert_{L^2(\vs)}
\ee
for any $0<\e\ll1$ and $h\in L^2(\vs)$.
\end{lemma}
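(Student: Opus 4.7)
The plan is to obtain the bound in two short steps: first, extract an $L^2(\vs)$ control on $\R \psi_\e[h]$ from the variational characterization of $\psi_\e[h]$; second, convert this into an $H^{-1/2}$ bound on $\psi_\e[h]$ itself by invoking the stability inequality~\eqref{inverse bound}.

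For the first step, since $\psi_\e[h]$ minimizes $\Psi$ over $H_0^1(\us_b)$ (with $g_\e$ replaced by $h$) and $0 \in H_0^1(\us_b)$ is an admissible competitor, comparing values at $\psi_\e[h]$ and at $0$ gives
\[
\Vert \R \psi_\e[h] - h \Vert_{L^2(\vs)}^2 + \kappa \e^{3} \Vert \pa_x \psi_\e[h] \Vert_{L^2(\us)}^2 \le \Vert h \Vert_{L^2(\vs)}^2.
\]
Dropping the nonnegative Tikhonov term and applying the triangle inequality yields $\Vert \R \psi_\e[h] \Vert_{L^2(\vs)} \le 2 \Vert h \Vert_{L^2(\vs)}$, with a constant independent of $\e$. (A slightly cleaner bound with constant $1$ results from pairing the Euler--Lagrange equation~\eqref{reg main eq} with $\psi_\e[h]$ and using Cauchy--Schwarz, but either version suffices.)

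For the second step, I need $\psi_\e[h] \in H_0^{-1/2}(\us_b)$ so that Assumption~\ref{geom GRT}\eqref{invert} is applicable. This follows from the continuous embedding $H^1(\br^2) \hookrightarrow H^{-1/2}(\br^2)$: any sequence in $\coi(\us_b)$ converging to $\psi_\e[h]$ in the $H^1$-norm also converges in the $H^{-1/2}$-norm, so $\psi_\e[h]$ lies in the closure of $\coi(\us_b)$ inside $H^{-1/2}(\br^2)$. Applying \eqref{inverse bound} then gives
\[
\Vert \psi_\e[h] \Vert_{H^{-1/2}(\br^2)} \le c \Vert \R \psi_\e[h] \Vert_{L^2(\vs)} \le 2c \Vert h \Vert_{L^2(\vs)},
\]
which is the claimed estimate, since $H^{-1/2}(\us)$ and $H^{-1/2}(\br^2)$ norms are equivalent for distributions supported in the fixed compact set $\overline{\us_b}$.

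I do not anticipate any real obstacle. The lemma is essentially a direct consequence of the minimizing property of $\psi_\e[h]$ combined with the Bolker-type inversion estimate built into Assumption~\ref{geom GRT}. The only point to be slightly careful about is uniformity of the constant in $\e$, which is automatic here because the $\e$-dependent Tikhonov term is simply discarded as a nonnegative quantity in the first step.
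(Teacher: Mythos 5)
Your proof is correct and follows essentially the same route as the paper: compare $\Psi(\psi_\e[h])$ with $\Psi(0)=\Vert h\Vert_{L^2(\vs)}^2$ to get $\Vert \R\psi_\e[h]\Vert_{L^2(\vs)}\le 2\Vert h\Vert_{L^2(\vs)}$, note $\psi_\e[h]\in H_0^1(\us_b)\subset H_0^{-1/2}(\us_b)$, and apply \eqref{inverse bound}. Your write-up merely makes explicit the embedding and triangle-inequality steps that the paper leaves implicit.
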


\begin{proof}
Since $\Psi(\psi\equiv0)=\Vert h \Vert_{L^2(\vs)}^2$ (cf. \eqref{min pb}), the solution $\psi_\e$ satisfies 
$\Vert \R \psi_\e\Vert_{L^2(\vs)}\le 2\Vert h \Vert_{L^2(\vs)}$, $0<\e\ll1$. By construction, $\psi_\e\in H_0^1(\us_b)\subset H_0^{-1/2}(\us_b)$, where $\us_b$ is bounded. Application of \eqref{inverse bound} completes the proof.
\end{proof}


Let $\tilde Q$ be the complete symbol of $\R^*\R$, i.e. $\R^*\R=\op(\tilde Q(x,\xi))$ (see \cite[Theorem 3.4]{Grig1994}). Let $\us_0$ be a small neighborhood of $x_0$. Pick a function $\chi\in\coi(\us)$ such that $\chi(x)\equiv1$ in $\us_0$. 

Let $\CB_\e\in L^{-2}(\us)$ be a parametrix for the operator on the left in \eqref{reg main eq}. Additionally, $\CB_\e\in L^1(\us)$, and its seminorms (i.e., the smallest constants $c_m$, $c_{m_1,m_2}$ in \eqref{symbols def}) are bounded with respect to $0<\e\ll1$. Thus,
\bs\label{soln 1}
\nrec(x)=&\big(\op\big(\tb_\e(x,\xi)\big)\chi(z)\R^* g_\e\big)(x)\\
&+\big(\op\big(\tb_\e(x,\xi)\big)(1-\chi(z))\R^* g_\e\big)(x)\\
&+\int_{\us_b} T_\e(x,x-z)\nrec(z)\dd z,\ x\in\us_0.
\es
Here $T(x,w)$ is the smooth kernel of a regularizing $\Psi$DO $\T_\e$, whose existence follows from the parametrix construction \cite[Appendix to Section I.4]{trev1}. 

\subsection{Approximation of $\nrec$}\label{ssec:approx nrec}

It is noted in \cite[(9.1) and Appendix C.2]{katsevich2025} that
\bs\label{R*R D0}
\tilde Q_0(x,\xi):=&\lim_{\e\to0}|\xi/\e|\tilde Q(x,\xi/\e)= 2\pi\sum_k\frac{|\xi|}{|\nu|} \frac{W^2(x,y)}{|\Delta_\Phi(x,\al)|},\\
y=&y_k(x,\xi),\nu=\nu_k(x,\xi),\ (x,\xi)\in\, \us\times(\br^2\setminus 0_2).
\es
Here 
\bs\label{Delta det}
\Delta_\Phi(x,y)=\text{det}\bma \dd_x\Phi(x,\al) \\ \dd_x \Phi_\al^\prime(x,\al) \ema,
\es
and the functions $y_k(x,\xi)\in \vs$ and $\nu_k(x,\xi)\in\br$ are local solutions to the equations:
\be\label{pxi v2}
p=\Phi(x,\al),\ \xi=-\nu\dd_x\Phi(x,\al),\ (\al,p)\in \vs.
\ee
The existence of at least one local solution follows from Assumption~\ref{geom GRT}\eqref{visib}. The local smoothness of solutions follows from Assumption~\ref{geom GRT}\eqref{li}. The function $\tilde Q_0(x,\xi)$ is positively homogeneous of degree zero in $\xi$. 

Denote
\be\label{tb0 def}
\tb_0(x,\hxi):=\frac{|\hxi|}{\tilde Q_0(x,\hxi)+\kappa|\hxi|^3},\ x\in \us,\hxi\in\br^2\setminus 0_2.
\ee
Denote further
\begin{align}
\label{K0 ker}
K_0(\cht;x,\al):=&\frac{1}{2\pi}\int_{\br}\tb_0(x,\la \dd_x\Phi(x,\al))e^{-i\la\cht}\dd\la,\\
\label{vth def}
\vartheta(\cht;x,\al)
=&\int_{\tsp\ik_\al} \ik_p\big(\cht+\mu\Phi_\al^\prime(x,\al)\cha\big)\ik_\al(\cha)\dd \cha,\\
\label{G def}
G(x,\al,\chq):=&\mu W(x,(\al,\Phi(x,\al)))\int_{\br}K_0(\chq-\cht;x,\al)\vartheta(\cht;x,\al)\dd\cht.
\end{align}

The following result is proven in Appendix~\ref{sec:Feps}.

\begin{lemma}\label{lem:Feps approx}
Under the assumptions of Theorem~\ref{thm:Lyapunov_nd} one has
\bs\label{Fe v4}
\nrec(x_0+\e\chx)=&\sum_{j\in J}\big[G(x_0,\al_{j_1},r_{j_1}-j_2)+R_j\big]\eta_j,\\
r_{j_1}:=&\frac{\Phi(x_0,\al_{j_1})}\e+\dd_x\Phi(x_0,\al_{j_1})\cdot\chx,\ |R_j|\le c\e\ln(1/\e).
\es
\end{lemma}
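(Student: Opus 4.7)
My starting point is the representation \eqref{soln 1}, which splits $\nrec(x_0+\e\chx)$ into (i) a ``near'' piece $\op(\tb_\e)\chi\R^*g_\e$, (ii) a ``far'' piece $\op(\tb_\e)(1-\chi)\R^*g_\e$, and (iii) a smoothing piece $\T_\e\nrec$. Since $\chi\equiv 1$ on $\us_0\ni x_0$, on the far piece the cutoff forces the $z$-variable away from $x = x_0+\e\chx$ by a fixed positive distance, while the Schwartz kernel of $\op(\tb_\e)$ decays off the diagonal faster than any power of $\e$; this term is therefore $O(\e^N)$ for every $N$ when expanded against \eqref{interp-noise}. The smoothing piece is handled by noting that $T_\e$ is smooth and that Lemma~\ref{lem:bdd sol} together with the uniform (probability-$1$) bound $\|g_\e\|_{L^2(\vs)}\le c$ gives $\|\nrec\|_{H^{-1/2}}\le c$; pairing with the smooth kernel yields an $O(\e^N)$ contribution as well. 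After being re-expanded as sums over $j$ using \eqref{interp-noise}, both of these pieces are absorbed into $R_j$.

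The heart of the argument is the near piece. Substituting \eqref{interp-noise}, interchanging the finite sum over $j\in J$ with the $\Psi$DO (legitimate for each fixed $\e$), I obtain
\[
\big(\op(\tb_\e)\chi\R^*g_\e\big)(x_0+\e\chx)=\sum_{j\in J}\eta_j\,\CK_\e^{(j)}(x_0+\e\chx),
\]
where $\CK_\e^{(j)}(x)=\big(\op(\tb_\e(x,\xi))\chi(z)(\R^*\ik_\e(\cdot-y_j))(z)\big)(x)$. Using \eqref{Radj def} and the oscillatory representation of $\op(\tb_\e)$, $\CK_\e^{(j)}(x_0+\e\chx)$ becomes a multiple oscillatory integral in $(\xi,z,\al)$ with amplitude $\tb_\e(x,\xi)\chi(z)W(z,\al,\Phi(z,\al))$ and data-side factor $\ik_\e((\al-\al_{j_1})\Delta\al,\Phi(z,\al)-p_{j_2})$.

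Next I rescale: $\xi=\hxi/\e$, $\cha=(\al-\al_{j_1})/(\mu\e)$, and expand. The Jacobian and homogeneity of $\tb_0$ from \eqref{tb0 def} give $\e\,\tb_\e(x_0+\e\chx,\hxi/\e)=\tb_0(x_0,\hxi)+O(\e)$ on the support of the integrand, with the error tracked using the symbol bounds in \eqref{symbols def}. The $z$-integration reduces to stationary-phase in the radial $\la$-direction along $\hxi=\la\dd_x\Phi(x_0,\al_{j_1})$, producing the inverse Fourier kernel $K_0(\cdot;x_0,\al_{j_1})$ defined in \eqref{K0 ker}. Taylor expansion $\Phi(x_0+\e\chx,\al)=\Phi(x_0,\al_{j_1})+\e\dd_x\Phi(x_0,\al_{j_1})\cdot\chx+\mu\e\,\cha\,\Phi_\al^\prime(x_0,\al_{j_1})+O(\e^2)$ turns the data-side shift into $\cht+\mu\Phi_\al^\prime\cha$, so that the $\cha$-integration against $\ik_\al$ (even) yields exactly $\vartheta$ from \eqref{vth def}. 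Assembling, the leading contribution is $\mu W(x_0,(\al_{j_1},\Phi(x_0,\al_{j_1})))(K_0\star\vartheta)(r_{j_1}-j_2)=G(x_0,\al_{j_1},r_{j_1}-j_2)$, with the shift $r_{j_1}-j_2$ coming from combining $\Phi(x_0,\al_{j_1})/\e+\dd_x\Phi\cdot\chx$ with $p_{j_2}/\e=j_2$.

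The main obstacle will be the uniform remainder bound $|R_j|\le c\e\ln(1/\e)$. Three sources of error must be controlled with constants independent of $j$: (a) the symbol correction $\e\tb_\e(x_0+\e\chx,\hxi/\e)-\tb_0(x_0,\hxi)$, which is $O(\e)$ in the regime $|\hxi|\ge 1$ but only $O(\e|\hxi|^{-1})$-type near $\hxi=0$, producing the logarithmic factor after $\la$-integration over the compact $\hxi$-window dictated by the compactly supported $\ik$; (b) the quadratic Taylor remainder of $\Phi$, which contributes a phase error $O(\e^2|\hxi|)$ that is absorbed by one integration by parts in $\hxi$ away from $0$; (c) the $(1-\chi)$ and $\T_\e$ pieces, which are split across $j$ by pairing with $\ik_\e(\cdot-y_j)$ and bounded termwise by $O(\e^N)$. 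Summability of $R_j\eta_j$ is not asserted here, only the pointwise bound on $R_j$, so the argument reduces to a deterministic estimate uniform in $j\in J$; the logarithm is intrinsic and arises precisely from integrating a symbol of order $-1$ in $\hxi$ against a bump, exactly as in the analogous FBP analyses of \cite{AKW2024_1,katsevich2024a}.
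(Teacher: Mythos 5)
Your decomposition is the same as the paper's (the three terms of \eqref{soln 1}, the rescaling $\xi=\hxi/\e$, $\cha=(\al-\al_{j_1})/(\mu\e)$, and the emergence of $G$ via $K_0$ and $\vartheta$), and the identification of the leading term is correct. However, the error analysis — which is the actual content of the lemma — has genuine gaps. The central one is your treatment of the symbol correction. You claim the error $\e\tb_\e(x,\hxi/\e)-\tb_0(x_0,\hxi)$ is controlled ``after $\la$-integration over the compact $\hxi$-window dictated by the compactly supported $\ik$.'' There is no such compact window: compact support of $\ik$ in the physical variables gives decay of $\tilde\ik$, not compact support in frequency, and the correction $\Delta\tb_\e=\tb_\e-\tb_0^\prime$ is only a symbol of order $0$, so its $\hxi$-integral over $\br^2$ is \emph{not} absolutely convergent and a pointwise $O(\e)$ bound on the symbol proves nothing. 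The paper devotes Lemma~\ref{lem:del J bnd} to exactly this term: one must flatten $\Phi$ by a change of variables, rescale anisotropically, and exploit $\tilde\ik_p\in L^1(\br)$ (which is where Assumption~\ref{ass:interp ker}\eqref{ikcont} enters) to convert the oscillatory integral into an absolutely convergent one and extract the factor $\e$. Your proposal has no substitute for this step.

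Two further points. First, your $O(\e^N)$ claims for the $(1-\chi)$ and $\T_\e$ pieces rest on incorrect reasoning: the kernel of $\op(\tb_\e)$ is merely smooth and bounded when $x$ and $\tsp(1-\chi)$ are separated by a \emph{fixed} distance (rapid decay would require the separation to grow like $1/\e$), and the global bound $\Vert\nrec\Vert_{H^{-1/2}}\le c$ paired with a smooth kernel gives only $O(1)$, not $O(\e^N)$. The correct per-$j$ bounds are $O(\e)$, obtained from $\Vert\ik_\e(\cdot-y_j)\Vert_{L^2}=O(\e)$, Lemma~\ref{lem:bdd sol}, and the mapping property $\T_\e:H^{-1/2}_{comp}\to H^s_{loc}$; this still fits inside $|R_j|\le c\e\ln(1/\e)$, but your stated mechanism does not. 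Second, you misattribute the logarithm: in the paper it arises from the Taylor remainder $O(\e(1+|\chw|))$ in the argument of $\ik_p$ integrated against the kernel $B_0(x,\chw)$, which is $O(-\ln|\chw|)$ at the origin and $O(|\chw|^{-3})$ at infinity over the ball $|\chw|\le c/\e$ (see \eqref{B0 bnds} and \eqref{aux fracs}), whereas you assign the log to the symbol near $\hxi=0$ and claim the Taylor remainder is absorbed by a single integration by parts. As written, your error budget would not close to $|R_j|\le c\e\ln(1/\e)$ uniformly in $j$.
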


This lemma is a key ingredient in the proof of Theorem~\ref{thm:Lyapunov_nd} in Section~\ref{sec:grid recon}.

\subsection{Example}\label{ssec:CRT example}
As a brief initial illustration of Theorems~\ref{thm:Lyapunov_nd} and ~\ref{GRF_thm}, we consider the classical Radon transform in the plane, which integrates over lines $\s_y=\{x\in\br^2:\vec\al\cdot x=p\}$, $\vec\al:=(\cos\al,\sin\al)$. In this case $\I_\al=[0,2\pi)\sim S^1$, $\I_p=\br$, $\Phi(x,\al)=\vec\al\cdot x$, $W(x,y)\equiv 1$, and
\bs\label{CRT stuff}
&\dd_x\Phi(x,\al)=\vec\al,\ |\dd_x\Phi(x,\al)|=1,\ \Phi_\al^\prime(x,\al)=\vec\al^\perp\cdot x,\\ 
&\Delta_\Phi(x,y)=\text{det}\bma \dd_x\Phi(x,\al) \\ \dd_x \Phi_\al^\prime(x,\al) \ema=1,\ \vec\al^\perp:=(-\sin\al,\cos\al).
\es
From \eqref{R*R D0} and \eqref{tb0 def}:
\be
\tilde Q_0(x,\xi)=4\pi,\ \tb_0(x,\hxi)=\frac{|\hxi|}{4\pi+\kappa|\xi|^3}.
\ee
Here we use that each singularity is visible in the data two times. Finally, from \eqref{K0 ker}--\eqref{G def},
\bs
K_0(\cht;x,\al)=&\frac1\pi\ioi\frac{\la}{4\pi+\kappa\la^3}\cos(\la\cht)\dd\la,\\
\vartheta(\cht;x,\al)
=&\int_{\tsp\ik_\al} \ik_p\big(\cht+\mu(\al^\perp\cdot x)\cha\big)\ik_\al(\cha)\dd \cha,\\
G(x,\al,\chq):=&\mu\int_{\br}K_0(\chq-\cht;x,\al)\vartheta(\cht;x,\al)\dd\cht.
\es

See Section~\ref{sec:numerics} for a more detailed and extensive example involving the GRT that integrates over circles.

\section{Proof of Theorem~\ref{thm:Lyapunov_nd}}\label{sec:grid recon}

Recall that the random vector $\vec N_\e^{\text{rec}}$ is defined in \eqref{Nvec}. Pick any vector $\vec\theta\in \br^L$. By \eqref{Fe v4},
\bs\label{recon dttpr 1}
\zeta_\e:=&\vec\theta\cdot \vec N_\e^{\text{rec}} = \sum_{l=1}^L \theta_l \nrec(x_0+\e\chx_l)= \sum_{j\in J} \zeta_{\e,j},\\ 
\zeta_{\e,j}=& \bigg[\sum_{l=1}^L \theta_l \big[G\big(\al_{j_1},r_{j_1}^{(l)}-j_2\big)+R_j^{(l)}\big]\bigg]\eta_j,\\ 
r_{j_1}^{(l)}:=&\frac{\Phi(x_0,\al_{j_1})}\e+\dd_x\Phi(x_0,\al_{j_1})\cdot\chx_l.
\es 
To prove that $\vec N_\e^{\text{rec}}$ converges in distribution to a Gaussian random vector, it suffices to show that for any $\vec\theta\in \br^L\setminus0_L$, $\lim_{\e\to0}\zeta_{\e}$ is a Gaussian random variable \cite [Theorem 10.4.5]{athreya2006}. The following result is proven in Appendix~\ref{sec:variance}.

\begin{lemma}\label{lem:Lyapunov_nd} Suppose the assumptions of Theorem~\ref{thm:Lyapunov_nd} are satisfied. One has
\be\label{exp Fe}
\Eb \nrec(x_0+\e\chx)=O(\e^{1/2}\ln(1/\e)),\ \e\to0,
\ee
for any fixed $\chx$. Moreover, with $\zeta_{\e,j}$ defined in \eqref{recon dttpr 1}, one has
\be\label{main lim Lnd}
\lim_{\e\to0}\frac{\sum_{j\in J}\Eb\lvert \zeta_{\e,j}-\bar\zeta_{\e,j}\rvert^3}{\big[\sum_{j\in J}\text{Var}(\zeta_{\e,j})\big]^{3/2}}=0.
\ee
\end{lemma}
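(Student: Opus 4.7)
The plan is to substitute the representation of Lemma~\ref{lem:Feps approx} into each of the three quantities being estimated and control the resulting sums over $j\in J$ using the moment bounds of Lemma~\ref{lem:eta props}. The single analytic input that drives every estimate is fast, uniform decay of $G$ in its last argument: from \eqref{tb0 def}, $\tb_0(x_0,\la\dd_x\Phi)$ is Lipschitz at $\la=0$ and $O(|\la|^{-2})$ at infinity, so by standard 1D Fourier estimates $K_0(\cht;x_0,\al)=O((1+\cht^2)^{-1})$, and convolution with the compactly supported $\vartheta$ preserves this, yielding $G(x_0,\al,\chq)=O((1+\chq^2)^{-1})$ uniformly in $\al$. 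Consequently, for every $k\ge1$,
\[
\sup_{\al,r,\e}\sum_{j_2\in\BZ}|G(x_0,\al,r-j_2)|^k\le c.
\]

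For \eqref{exp Fe}, apply $\Eb$ to the formula of Lemma~\ref{lem:Feps approx}. By Lemma~\ref{lem:eta props}, $|\bar\eta_j|=O(\e^{3/2})$ uniformly in $j$. The $G$-term contributes at most $O(\e^{3/2})\cdot|J_\al|\cdot c=O(\e^{1/2})$, since only the $j_1$-sum survives after applying the displayed bound with $k=1$ and $|J_\al|=O(\e^{-1})$. The $R_j$-term contributes $O(\e^{3/2})\cdot|J|\cdot c\e\ln(1/\e)=O(\e^{1/2}\ln(1/\e))$ from $|J|=O(\e^{-2})$, giving the stated bound.

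For the numerator of \eqref{main lim Lnd}, write $\zeta_{\e,j}=A_j\eta_j$ with $A_j=\sum_{l=1}^L\theta_l[G(x_0,\al_{j_1},r_{j_1}^{(l)}-j_2)+R_j^{(l)}]$; boundedness of $G$ and of $R_j^{(l)}$ gives $|A_j|\le c$ uniformly in $j$ and $\e$. Then $\Eb|\zeta_{\e,j}-\bar\zeta_{\e,j}|^3=|A_j|^3\Eb|\eta_j-\bar\eta_j|^3=O(\e^{3/2})|A_j|^3$, and $\sum_j|A_j|^3\le c\sum_jA_j^2=O(\e^{-1})$ (the displayed bound with $k=2$), so the numerator is $O(\e^{1/2})$. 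For the denominator, Lemma~\ref{lem:eta props} gives $\text{Var}(\zeta_{\e,j})=A_j^2(\sigma^2(y_j)+O(\e^{1/2}))\Delta\al$. Treating $\mu\e\sum_{j_1}$ as a Riemann sum in $\al$, and using the Lipschitz regularity of $\sigma$ near $\Gamma_{x_0}$ (Assumption~\ref{ass:x0}\eqref{sig cont}) to replace $\sigma^2(y_j)$ by $\sigma^2(\al_{j_1},\Phi(x_0,\al_{j_1}))$ to leading order for the $O(1)$ relevant values of $j_2$, one obtains a limit of the form $\mu\int_{\I_\al}\sigma^2(\al,\Phi(x_0,\al))\,\CH(\al;\vec\theta)\,\dd\al+o(1)$, where $\CH(\al;\vec\theta)\ge0$ is a Poisson-summation-type quantity built from integer translates of $\sum_l\theta_l G(x_0,\al,\cdot-\dd_x\Phi(x_0,\al)\cdot\chx_l)$ (a bounded $\al$-dependent sampling phase may enter as well and needs to be tracked).

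The main obstacle is to show that this limit is strictly positive for every $\vec\theta\ne0_L$, so that the denominator of \eqref{main lim Lnd} is $\Omega(1)$ and the ratio is $O(\e^{1/2})\to0$. Assumption~\ref{ass:x0}\eqref{Ial_aux} guarantees $\sigma(\al,\Phi(x_0,\al))\ne0$ on an open arc $I_\al$, reducing the problem to ruling out $\CH\equiv0$ on $I_\al$. I would argue by contradiction: if $\CH$ vanished on a set of positive measure in $I_\al$, then $\chq\mapsto\sum_l\theta_l G(x_0,\al,\chq-\dd_x\Phi(x_0,\al)\cdot\chx_l)$ would have to vanish at every integer translate of that phase, and a Fourier/Poisson argument exploiting the $O(\chq^{-2})$-decay of $G$, the distinctness of the $\chx_l$, and the non-vanishing/linear independence coming from Assumption~\ref{geom GRT}\eqref{li} would force $\vec\theta=0_L$. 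Should the limiting variance truly vanish (a genuinely degenerate case), $\zeta_\e-\bar\zeta_\e\to0$ in $L^2$, so $\zeta_\e$ converges weakly to a degenerate Gaussian and the conclusion of Theorem~\ref{thm:Lyapunov_nd} still holds.
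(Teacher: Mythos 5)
Your treatment of \eqref{exp Fe} and of the numerator of \eqref{main lim Lnd} is correct and essentially the paper's: the decay $|G(x_0,\al,\chq)|\le c(1+|\chq|)^{-2}$, the moment bounds of Lemma~\ref{lem:eta props}, and the counts $|J_\al|=O(\e^{-1})$, $|J|=O(\e^{-2})$ give $O(\e^{1/2}\ln(1/\e))$ for the mean and $O(\e^{1/2})$ for the numerator. The genuine gap is in the denominator. After the (legitimate) replacement of $\sigma^2(y_j)$ by $\sigma^2(\al_{j_1},\Phi(x_0,\al_{j_1}))$, one has $D_\e=\Delta\al\sum_{j_1}\psi(\al_{j_1},r_{j_1})\sigma^2(\al_{j_1},\Phi(x_0,\al_{j_1}))+o(1)$ with $\psi(\al,r):=\sum_{j_2\in\BZ}f^2(\al,r-j_2)$ and $f(\al,r)=\sum_l\theta_l G\big(x_0,\al,r+\dd_x\Phi(x_0,\al)\cdot\chx_l\big)$. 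This is \emph{not} a Riemann sum of a fixed continuous function of $\al$: the argument $r_{j_1}=\Phi(x_0,\al_{j_1})/\e+\cdots$ has a fractional part that oscillates fully across $[0,1)$ from one grid index to the next. The ``bounded $\al$-dependent sampling phase'' you defer is the entire difficulty. The paper resolves it by an equidistribution (Weyl-type) argument imported from \cite[Appendix D]{katsevich2024a}, enabled precisely by Assumption~\ref{ass:x0}\eqref{Phi prpr} (finitely many zeros of $\Phi_{\al\al}^{\prime\prime}(x_0,\cdot)$), which you never invoke; it yields $\lim_{\e\to0}D_\e=\int_{\I_\al}\big(\int_0^1\psi(\al,r)\dd r\big)\sigma^2(\al,\Phi(x_0,\al))\dd\al=\int_{\I_\al}\Vert f(\al,\cdot)\Vert_{L^2(\br)}^2\,\sigma^2(\al,\Phi(x_0,\al))\dd\al$. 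Without that step the limit need not even exist.

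The unresolved phase also breaks your positivity argument. Your $\CH(\al;\vec\theta)$ is a sum of squares over integer translates \emph{at one particular phase}; its vanishing only forces $f(\al,\cdot)$ to vanish on a countable set, which does not imply $f(\al,\cdot)\equiv0$ and so cannot feed the Fourier/Vandermonde step. Once equidistribution averages the phase, positivity reduces to $\int_\br f^2(\al,r)\dd r>0$ on the arc $I_\al$ of Assumption~\ref{ass:x0}\eqref{Ial_aux}; if this failed, analyticity of $\tilde\vartheta$, $W>0$, and $\tb_0\neq0$ would give $\sum_l\theta_l e^{-i\la\dd_x\Phi(x_0,\al)\cdot\chx_l}\equiv0$, and a Vandermonde argument at an $\al_0$ where the numbers $\dd_x\Phi(x_0,\al_0)\cdot\chx_l$ are pairwise distinct (such $\al_0$ exists by Assumption~\ref{geom GRT}\eqref{li}) forces $\vec\theta=0_L$. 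Finally, your degenerate-case fallback does not rescue the \emph{lemma}: \eqref{main lim Lnd} is a statement about a ratio, and if the denominator tends to $0$ the ratio is indeterminate; that remark could at best salvage Theorem~\ref{thm:Lyapunov_nd}, not \eqref{main lim Lnd} itself.
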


Lemma~\ref{lem:Lyapunov_nd} implies that the family of random variables, $\zeta_{\e,j}-\bar\zeta_{\e,j}$, $\e>0$, satisfies the Lyapunov condition for triangular arrays \cite [Definition 11.1.3]{athreya2006}. It follows from \cite [Corollary 11.1.4]{athreya2006}, the first line in \eqref{recon dttpr 1}, and \eqref{exp Fe} that $\zeta=\lim_{\e\to0} \zeta_{\e}$ is a zero mean Gaussian random variable. Hence, by \cite [Theorem 10.4.5]{athreya2006}, $\lim_{\e\to0}\vec N_\e^{\text{rec}}$ is a zero mean Gaussian random vector, where the limit is in the sense of weak convergence. This completes the proof of Theorem~\ref{thm:Lyapunov_nd}.

\section{Proof of Theorem \ref{GRF_thm}}\label{sec:prf GRF}
Our goal is to show that $\nrec(x_0+\e\chx)\Rightarrow N^{\text{rec}}(\chx)$, $\chx\in D$, as $C$-valued random variables as $\e\to0$. To do this, we use the following definition and theorem.

\begin{definition}[{\cite[p. 189]{Khoshnevisan2002}}]\label{def:kh}
Let $\BP_n$ be the distribution of a $C$-valued random variable $X_n$, $1\leq n\leq \infty$. The collection $(X_n)$ is tight if for all $\de\in(0,1)$, there exists a compact set $\Gamma_\de\in C$ such that $\sup_n \BP(X_n\not\in\Gamma_\de)\le \de$. 
\end{definition}

By finite-dimensional distributions of a $C$-valued random variable $X(\chx)$, $\chx\in D$, we mean the collection of all the distributions of the $\br^L$-dimensional random variables $\{X(\chx_1), X(\chx_2),\cdots,X(\chx_L)\}$ as we vary $L\geq 1$ and the points $\chx_1,\dots,\chx_L\in D$, see e.g. \cite[p. 154]{Khoshnevisan2002}.

\begin{theorem}[{\cite[Proposition 3.3.1, p. 197]{Khoshnevisan2002}}]\label{kh_thm}
Suppose $X_n,\ 1\leq n\leq \infty$, are $C$-valued random variables. Then $X_n\Rightarrow X_{\infty}$  provided that:
\begin{enumerate}
\item Finite dimensional distributions of $X_n$ converge to that of $X_{\infty}$.
\item $(X_n)$ is a tight sequence.
\end{enumerate}
\end{theorem}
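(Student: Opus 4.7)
The plan is to apply Theorem~\ref{kh_thm}: I verify (i) convergence of finite-dimensional distributions and (ii) tightness of the family $\{\nrec(x_0+\e\,\cdot)\}_{\e>0}$ viewed as $C(\bar D)$-valued random variables, and then identify the limiting covariance explicitly.

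For (i), Theorem~\ref{thm:Lyapunov_nd} already gives that for any $\chx_1,\dots,\chx_L\in\bar D$ the vector $\vec N_\e^{\text{rec}}$ converges weakly to a zero-mean Gaussian vector, so only the covariance has to be computed. Insert the expansion of Lemma~\ref{lem:Feps approx} into $\Eb[\nrec(x_0+\e\chx)\nrec(x_0+\e\chy)]$. Using independence of the $\eta_j$, together with $\Eb\eta_j=O(\e^{3/2})$ and $\Eb\eta_j^2=(\sigma^2(y_j)+O(\e^{1/2}))\Delta\al$ from Lemma~\ref{lem:eta props}, only the diagonal $j=k$ terms are leading, giving
\be
\text{Cov}_\e(\chx,\chy)=\sum_{j\in J} G(x_0,\al_{j_1},r_{j_1}^{(\chx)}-j_2)\,G(x_0,\al_{j_1},r_{j_1}^{(\chy)}-j_2)\,\sigma^2(y_j)\Delta\al+o(1),
\ee
after bounding the $R_j$-contributions via $|R_j|\le c\e\ln(1/\e)$ and the cardinality of the effectively contributing $j\in J$. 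The sum over $j_1$ is a Riemann sum (step $\Delta\al=\mu\e$) for an integral over $\I_\al$, and for each fixed $j_1$ the sum over $j_2\in\BZ$ is identified with the convolution $(G\star G)(x_0,\al_{j_1},\dd_x\Phi(x_0,\al_{j_1})\cdot(\chx-\chy))$, exploiting the compact support of $\vartheta$ in its first argument and the integrability structure of $G$. Combined with the Lipschitz continuity of $\sigma^2$ near $\Gamma_{x_0}$ from Assumption~\ref{ass:x0}\eqref{sig cont}, this produces the formula \eqref{Cov main}.

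For (ii), which I expect to be the main obstacle, the plan is to verify a Kolmogorov-Chentsov type moment condition: produce constants $m\in\N$ and $\ga>2$, independent of $\e$, such that
\be\label{KCest}
\Eb\big|\nrec(x_0+\e\chx)-\nrec(x_0+\e\chy)\big|^{2m}\le c\,|\chx-\chy|^{\ga}
\ee
for all $\chx,\chy\in\bar D$, combined with a uniform bound on $\Eb|\nrec(x_0+\e\chx_*)|^{2m}$ at a single point $\chx_*\in\bar D$. Writing the increment via Lemma~\ref{lem:Feps approx} as $\sum_j a_j(\chx,\chy)\eta_j$, smoothness of $G$ in its third argument yields $a_j(\chx,\chy)=O(|\chx-\chy|)$ uniformly in $j$, while the set of $j$ with nonvanishing $a_j$ has cardinality $O(\e^{-2})$. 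The independence of the $\eta_j$ together with the moment bounds of Assumption~\ref{ass:noise}\eqref{moments} then permits \eqref{KCest} to be obtained via a Rosenthal / Marcinkiewicz-Zygmund type inequality. The delicate point, which I expect to be carried out in Appendix~\ref{sec:tight}, is to balance the $O(\e^{-2})$ number of summands against the per-term variance $O(\e)$ so that all factors of $\e$ cancel and only $|\chx-\chy|^{2m}$ survives, with $m$ chosen large enough (e.g.\ $m=2$ already gives $\ga=4>n=2$).

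With (i) and (ii) in hand, Theorem~\ref{kh_thm} yields the weak convergence $\nrec(x_0+\e\chx)\Rightarrow N^{\text{rec}}(\chx)$ in $C(\bar D)$, and the limit is a zero-mean GRF with covariance \eqref{Cov main}. Continuity of its sample paths follows automatically, since $N^{\text{rec}}$ is a distributional limit of $C(\bar D)$-valued random variables and is therefore supported on $C(\bar D)$.
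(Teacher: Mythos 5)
Your proposal does not prove the stated theorem; it \emph{uses} it. Theorem~\ref{kh_thm} is the abstract weak-convergence criterion in $C(\bar D)$ (finite-dimensional convergence plus tightness implies $X_n\Rightarrow X_\infty$), which the paper does not prove but quotes verbatim from \cite[Proposition 3.3.1]{Khoshnevisan2002}. Your first sentence, ``the plan is to apply Theorem~\ref{kh_thm},'' makes the argument circular as a proof of that statement: what you have written is a proof sketch of Theorem~\ref{GRF_thm}, not of Theorem~\ref{kh_thm}. A proof of the actual statement would run along standard lines: tightness plus Prohorov's theorem (valid since $C(\bar D)$ is Polish) gives relative compactness of the laws of $(X_n)$; any subsequential weak limit has the same finite-dimensional distributions as $X_\infty$ by hypothesis (1); the Borel $\sigma$-field of $C(\bar D)$ is generated by the evaluation maps, so finite-dimensional distributions determine a law on $C(\bar D)$; hence every subsequential limit coincides with the law of $X_\infty$ and the full sequence converges. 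None of this appears in your write-up.

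Even read charitably as a proof of Theorem~\ref{GRF_thm}, your tightness step diverges from the paper's and has a gap. The paper (Appendix~\ref{sec:tight}) proves tightness by bounding $\Eb\Vert \nrec(x_0+\e\chx)\Vert_{W^{2,2}(D)}^2$ uniformly in $\e$, invoking compactness of the embedding $W^{2,2}(D)\hookrightarrow C(\bar D)$, and applying Chebyshev to the compact sets $\Gamma_\de$ of Definition~\ref{def:kh}; this requires going back to the derivation of Lemma~\ref{lem:Feps approx} and differentiating the representation twice in $\chx$, which is why Assumption~\ref{ass:interp ker} demands $\ik_p^{(3)}\in L^\infty(\br)$. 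Your Kolmogorov--Chentsov route needs a modulus of continuity in $\chx$ for the \emph{increment} $\sum_j\bigl(a_j(\chx)-a_j(\chy)\bigr)\eta_j$, but Lemma~\ref{lem:Feps approx} only supplies the pointwise bound $|R_j|\le c\e\ln(1/\e)$ with no control on how $R_j$ varies with $\chx$; without reworking the proof of that lemma (essentially what the paper does), the estimate $\Eb|\nrec(x_0+\e\chx)-\nrec(x_0+\e\chy)|^{2m}\le c|\chx-\chy|^\ga$ cannot be closed. The moment arithmetic you outline (per-term variance $O(\e)$ against $O(\e^{-1})$ effective terms per fixed $|\chx-\chy|$ scale, with the fourth-moment Rosenthal correction of order $\e^{1/2}$) is otherwise sound, so the alternative route is salvageable, but it is not the paper's argument and is not a proof of Theorem~\ref{kh_thm}.
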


The preceding definition and theorem are formulated for sequences of $C$-valued random variables indexed by $n$ and the limit is taken as $n\to\infty$. With obvious modifications, these notions and results extend to families of random variables indexed by a continuous variable $\e>0$ and when the limit is taken as $\e\to0$.

By Theorem~\ref{thm:Lyapunov_nd}, all the finite-dimensional distributions of $\nrec(x_0+\e\chx)$ converge to that of $N^{\text{rec}}(\chx)$. Thus it remains to verify Property 2 of Theorem \ref{kh_thm}. 
The following result is proven in Appendix~\ref{sec:tight}.

\begin{lemma}\label{lem:tight} $(N_{\e}^{\text{rec}}(x_0+\e\check{x}))$ is a tight family. 
\end{lemma}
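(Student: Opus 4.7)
The plan is to apply a Kolmogorov--Chentsov-type tightness criterion, which for $C(\bar D)$-valued random variables on a bounded Lipschitz $D\subset\br^2$ reduces to verifying: (i) marginal tightness at some fixed $\chx_0\in\bar D$, e.g.\ $\sup_\e \Eb |N_\e^{\text{rec}}(x_0+\e\chx_0)|^2<\infty$; and (ii) a uniform-in-$\e$ increment bound
\begin{equation}
\Eb \bigl| N_\e^{\text{rec}}(x_0+\e\chx) - N_\e^{\text{rec}}(x_0+\e\chy)\bigr|^4 \le C\,|\chx-\chy|^{2+\delta}
\end{equation}
for some $\delta>0$. Both will be derived from the expansion in Lemma~\ref{lem:Feps approx}, and will then combine with Theorem~\ref{thm:Lyapunov_nd} (giving convergence of finite-dimensional distributions) to yield the conclusion via Theorem~\ref{kh_thm}.

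For (ii) I would write the increment as $\sum_j b_j\eta_j$ with
\begin{equation}
b_j=G(x_0,\al_{j_1},r_{j_1}(\chx)-j_2)-G(x_0,\al_{j_1},r_{j_1}(\chy)-j_2)+R_j(\chx)-R_j(\chy),
\end{equation}
and use independence of the $\eta_j$'s together with the moment bounds of Lemma~\ref{lem:eta props} to expand
\begin{equation}
\Eb\Bigl(\tsum_j b_j\eta_j\Bigr)^4\le C\Bigl(\tsum_j b_j^2\,\Eb\eta_j^2\Bigr)^2+C\tsum_j b_j^4\,\Eb\eta_j^4 + (\text{lower-order mean terms}).
\end{equation}
The principal estimate $\sum_j b_j^2\Eb\eta_j^2\le C|\chx-\chy|^2$ would follow from three ingredients: Lipschitz continuity of $G(x_0,\al,\cdot)$, so that the smooth part of $b_j$ is bounded by $C|r_{j_1}(\chx)-r_{j_1}(\chy)|\le C|\chx-\chy|$; sufficient decay of $G$ in its third argument to give $\sum_{j_2}|G(x_0,\al,r-j_2)|^2\le C$ uniformly in $(r,\al)$; and conversion of $\Delta\al\sum_{j_1}$ into $\int_{\I_\al}d\al$ by a Riemann-sum estimate, paired with $\Eb\eta_j^2=O(\Delta\al)$. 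The quartic sum $\sum_j b_j^4\Eb\eta_j^4$ is bounded by $C\e^{1/2}|\chx-\chy|^4$ using $\Eb\eta_j^4=O(\e^{3/2})$ and the trivial inequality $b_j^4\le \sup_j b_j^2\cdot b_j^2$. Marginal tightness (i) is then a direct variance computation along the same lines, applied at $\chx_0=\chy=\chx_0$.

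\textbf{Main obstacle.} The pieces involving $R_j(\chx)-R_j(\chy)$ are the delicate part. Lemma~\ref{lem:Feps approx} supplies only the size estimate $|R_j|\le c\e\ln(1/\e)$, which is uniformly small in $\e$ but does not, on its face, scale like a positive power of $|\chx-\chy|$. The hard step will be to revisit the derivation of Lemma~\ref{lem:Feps approx} in Appendix~\ref{sec:Feps} and track the $\chx$-derivatives of each remainder term in order to upgrade the bound to a Lipschitz estimate $|R_j(\chx)-R_j(\chy)|\le c\e\ln(1/\e)\,|\chx-\chy|$, uniformly in $j\in J$. Once available, the $R_j$-contribution to the 4th moment is manifestly of lower order than $|\chx-\chy|^4$, and (ii) closes with $\delta=2$. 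A robust fallback, if the Lipschitz upgrade proves cumbersome, is to treat the $R_j$-piece separately: show that $\sup_{\chx\in\bar D}\bigl|\sum_j R_j(\chx)\eta_j\bigr|\to 0$ in probability by a chaining argument over a fine grid in $\bar D$ combined with a crude Lipschitz bound in $\chx$ on a single realization, which makes that piece trivially tight at $0$ and hence absorbable into the main term.
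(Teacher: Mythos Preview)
Your approach is correct in outline and genuinely different from the paper's. The paper does not use a Kolmogorov--Chentsov increment criterion at all; instead it fixes compact sets $\Gamma_\de=\{f:\Vert f\Vert_{W^{2,2}(D)}^2\le 1/\de\}$, uses the compact embedding $W^{2,2}(D)\hookrightarrow C(\bar D)$, and shows $\Eb\Vert N_\e^{\text{rec}}(x_0+\e\chx)\Vert_{W^{2,2}(D)}^2\le c_0$ uniformly in $\e$, after which tightness is a one-line Chebyshev estimate. To get that Sobolev bound, the paper does exactly what you anticipate as the hard step: it revisits the derivation of Lemma~\ref{lem:Feps approx} and tracks $\pa_{\chx}^m$, $|m|\le 2$, through every remainder term, obtaining coefficients $A_{m,j}(\chx)$ with the same $(1+|r_{j_1}-j_2|)^{-2}+\e\ln(1/\e)$ control as in the undifferentiated case. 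So both routes funnel through the same bottleneck (derivative control on the $R_j$), and your identification of that as the main obstacle is exactly right.

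The trade-offs: the paper's $W^{2,2}$ route is cleaner once the derivative bounds are in hand, since it needs only second moments of $\eta_j$ and avoids the fourth-moment bookkeeping (cross terms, mean corrections) that you sketch. On the other hand, your route requires only a first-derivative (Lipschitz) bound on $R_j(\chx)$, whereas the paper needs two derivatives; this is why Assumption~\ref{ass:interp ker}\eqref{ikcont} demands $\ik_p^{(3)}\in L^\infty$, a hypothesis your argument could in principle relax to $\ik_p^{\prime\prime}\in L^\infty$. Your fallback chaining idea for the $R_j$-piece is unnecessary: the Lipschitz upgrade goes through by the same mechanism the paper uses, since differentiating $\ik_p((\Phi(x_0+\e\chx-\e\chw,\al)-p_{j_2})/\e)$ in $\chx$ produces no growing powers of $\e$.
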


By Theorem~\ref{kh_thm}, $\nrec(x_0+\e\chx)\Rightarrow N^{\text{rec}}(\chx)$ as $C$-valued random variables. Since $C$ is a complete metric space, it follows that $N^{\text{rec}}(\chx)$ has continuous sample paths with probability 1.

By Lemma~\ref{lem:Lyapunov_nd}, $N^{\text{rec}}(\chx)$ is a zero mean GRF. To completely characterize this GRF, we calculate its covariance function 
\be
\text{Cov}(\chx,\chy)=\Eb(N^{\text{rec}}(\chx)N^{\text{rec}}(\chy)),\  \chx,\chy\in D.
\ee
In fact, essentially this has already been done in the proof of Lemma~\ref{lem:Lyapunov_nd}, see Section~\ref{sec:prf outline}. From \eqref{K0 ker}--\eqref{G def} and \eqref{recon dttpr 4} we obtain
\bs\label{C_lm}
\text{Cov}&(\chx,\chy)=\mu^2\int_{\I_{\al}} W^2(x_0,\Phi(x_0,\al))\sigma^{2}(\alpha,\Phi(x_0,\al))\\
&\times\CF_{\la\to r}\big(\big|\tb_0(x_0,\la \dd_x\Phi(x_0,\al))\tilde\vartheta(\la;x_0,\al)\big|^2\big)
\dd\alpha,\ r=\al\cdot (\chx - \chy).
\es
Here $\tilde\vartheta$ is the 1D Fourier transform of $\vartheta$ with respect to the first argument.

\section{Numerical experiments}\label{sec:numerics}

The GRT integrates over circles $\s_y$, $y=(\al,\rho)$, with various radii $\rho>0$ and centers $R\vec\al$, $\al\in[0,2\pi)$, where $R=10$. The integration weight is $W\equiv 1$. Thus, $\rho=\Phi(x,\al)=|x-R\vec\al|$ and
\bs\label{needed quantities}
&\dd_x\Phi(x,\al)=\frac{x-R\vec\al}{|x-R\vec\al|}=:\vec\Theta,\
\Phi_\al^\prime(x,\al)=R\vec\Theta\cdot \vec\al^\perp,\\ 
&\dd_x\Phi_\al^\prime(x,\al)=-\frac R{|x-R\vec\al|}\vec\al^\perp+c\vec\Theta,\\ 
&\Delta_\Phi(x,\al)=\det \bma \vec\Theta \\ -\frac R{|x-R\vec\al|}\vec\al^\perp+c\vec\Theta \ema
=\frac {R\vec\al\cdot\vec\Theta}{|x-R\vec\al|}\not=0,\ |x|<R.
\es
Here $\vec\al^\perp=(-\sin\al,\cos\al)$, and $\vec\Theta^\perp$ is defined similarly. The continuous data corresponds to $\al\in\I_\al:=[0,2\pi)\sim S^1$ and $\rho\in\I_p:=(0,2R)$. The image domain is $\us=\{|x|<R\}$. The support of the object is contained in the square reconstruction region $\us_b:=(-R_{rec},R_{rec})^2$, $R_{rec}=3.7$. This corresponds to the set $\us_b$ used in \eqref{min pb}.

As is easily seen, for each $(x,\xi)\in T^*\us_b\setminus0_2$ there are two solutions, $y_k(x,\xi)$, $\nu_k(x,\xi)$, $k=1,2$, to \eqref{pxi v2}. To find $y_{1,2}$, we first solve $|x+t\vec\xi|=R$. This gives two values $t_1,t_2$, $t_2<0<t_1$. Then $\al_k$ is determined from $x+t_k\xi=R\vec\al_k$, and $\rho_k=|x-R\vec\al_k|$, $k=1,2$. By a simple geometric argument (cf. \eqref{needed quantities}), 
\be
|R\vec\al_k\cdot\vec\Theta_k|=|R\vec\al_1-R\vec\al_2|/2,\ k=1,2.
\ee

Further, substituting the above into \eqref{R*R D0} gives
\bs
\tilde Q_0(x,\xi)=&2\pi\sum_{k=1}^2 \frac{|\dd_x\Phi(x,\al_k)|}{|\Delta_\Phi(x,\al_k)|}
=2\pi\bigg[\frac {|x-R\vec\al_1|}{|R\vec\al_1\cdot\vec\Theta_1|}+\frac {|x-R\vec\al_2|}{|R\vec\al_2\cdot\vec\Theta_2|}\bigg]\\
=&2\pi\bigg[\frac {|x-R\vec\al_1|}{|R\vec\al_1-R\vec\al_2|/2}+\frac {|x-R\vec\al_2|}{|R\vec\al_1-R\vec\al_2|/2}\bigg]=4\pi.
\es
Combining with \eqref{tb0 def}--\eqref{G def} we obtain
\bs\label{G 1st}
G(x,\al,q)=&\frac{\mu}{2\pi} \int \frac{|\la|}{4\pi+\kappa|\la|^3}\tilde\ik_\rho(\la)\tilde\ik_\al(-\mu R\vec\Theta\cdot \vec\al^\perp \la) e^{-i\la q} \dd\la,
\es
where $\vec\Theta$ is the same as in \eqref{needed quantities}.

It is shown in \cite{katsevich2025} that the circular GRT used in this section satisfies Assumptions~\ref{ass:Phi} and \ref{geom GRT}. Since $\Phi(x,\al)$ is analytic in $\al$, Assumption~\ref{ass:x0}\eqref{Phi prpr} holds for every $x_0$ with $|x_0|<R$.

The discrete data are given at the points
\bs\label{d data}
&\al_{j_1}=j_1\Delta\al,\ \Delta\al=2\pi/N_\al,\ 0\le j_1<N_\al,\ N_\al=300,\\
&\rho_{j_2}=\rho_{\min}+j_2 \Delta\rho,\ \e:=\Delta\rho=(\rho_{\max}-\rho_{\min})/(N_\rho-1),\ 
0\le j_2<N_\rho,\\
&\rho_{\min}=R- R_{rec}\sqrt2,\ \rho_{\max}=R+R_{rec}\sqrt2,\ N_\rho=451.
\es

Before reconstruction, the GRT data $g(y_j)$ are interpolated from the grid \eqref{d data} to a denser grid using \eqref{interp-data}. This refined grid, where $g_\e$ is specified, has the same structure as \eqref{d data} but with $N_\al^\prime>N_\al$ and $N_\rho^\prime>N_\rho$. For this interpolation, we use the Keys interpolation kernel \cite{Keys1981, btu2003}
\bs\label{keys}
\ik_\al(t)=&\ik(t/\mu),\ \ik_\rho(t)=\ik(t),\ \mu=\Delta\al/\Delta\rho, \\
\ik(t)=&3B_3(t) - (B_2(t-1/2) + B_2(t + 1/2)),
\es
where $B_n$ is the cardinal $B$-spline of degree $n$ supported on $[-(n+1)/2, (n+1)/2]$. Therefore $\tsp\ik=[-2,2]$. The kernel is a piecewise-cubic polynomial with continuous $\ik,\ik^\prime$ and bounded $\ik^{\prime\prime}$, so $\ik\in C_0^2(\br)$. As is well-known, 
\be\label{FT spline}
(\CF B_n)(\la)=\text{sinc}(\la/2)^{n+1},\ \text{sinc}(t):=\sin t/t.
\ee
Therefore,
\be\label{FT keys}
\tilde\ik(\la)=(\text{sinc}\,t)^3[3\,\text{sinc}\,t-2\cos t],\ t=\la/2.
\ee

Substituting \eqref{keys} and \eqref{FT keys} into \eqref{G 1st} we find:
\bs\label{G 2nd}
G(x,\al,q)=&\frac{\mu}{\pi} \ioi \frac{\la}{4\pi+\kappa\la^3}\tilde\ik(\la)\tilde\ik(-\mu a(x,\al)\la) \cos(\la q) \dd\la,
\es
where $a(x,\al)$ is the distance from the origin to the chord of the circle $|x|=R$ passing through the points $x$ and $R\vec\al$. Similarly, the covariance function is computed by (cf. \eqref{Cov main}, \eqref{C_lm})
\bs\label{covar 1st}
C(\check x)=\frac{\mu^2}{\pi}\int_0^{2\pi} \ioi & \bigg[\frac{\la}{4\pi+\kappa\la^3}\tilde\ik(\la)\tilde\ik(-\mu a(x_0,\al)\la)\bigg]^2 \cos(\la \vec\Theta\cdot\chx) \dd\la\\ 
&\times\sigma^2(\al,|x_0-R\vec\al|)\dd\al,\ \vec\Theta=\frac{x_0-R\vec\al}{|x_0-R\vec\al|}.
\es

In order to approximate the continuous GRT $\R$ and its adjoint $\R^*$, the reconstruction is performed on a dense $801\times801$ grid covering the same square region $\us_b$. The regularization parameter is set at $\kappa=0.5$. 

The functional \eqref{min pb} is minimized using gradient descent. The stopping criterion is met when either
\be
\text{(i)}\ \Vert f_{k+1}-f_k\Vert_{L^\infty(\us_b)}<10^{-4} \text{ or } \text{(ii)}\ \Psi(f_k)\le 10^{-6}\Psi(f_0) 
\ee
occurs three times (not necessarily consecutively). Here $f_k$ denotes the image at the $k$-th iteration, and $f_0(x)\equiv0$. Occurrences of (i) and (ii) are counted separately; mixed occurrences are not combined toward the total of three.

From the proof of Lemma~\ref{lem:Feps approx} it follows that \eqref{Fe v4} holds for any choice of $\us_b\Subset\us$, provided $x_0\in\us_b$. However, computing $\nrec(x)$ by numerically solving \eqref{min pb} is sensitive to discretization errors. To mitigate this, we chose a smaller domain $\us_b^\prime=(1,1.4)\times(0.5,0.9)$ centered at the fixed point $x_0=(1.2,0.7)$. We set $\eta_j=1$ for a single index $j_0=(j_1,j_2)=(100,229)$ and $\eta_j=0$ for all $j\neq j_0$. The index $j_0$ was selected so that $\rho_{j_2}\approx |x_0-R\vec\al_{j_1}|$, with $\al_{j_1}=2.1$ and $\rho_{j_2}=10.1$. The data are then interpolated to a fine grid with $N_\al^\prime=2400$ and $N_\rho^\prime=3601$. Finally, the reconstruction uses an $801\times 801$ grid covering $\us_b^\prime$ (rather than the original $\us_b$). The large values of $N_\al^\prime$ and $N_\rho^\prime$, together with a dense reconstruction grid on a small region, ensure high accuracy.

\begin{figure}[h]
{\centerline{
{\epsfig{file={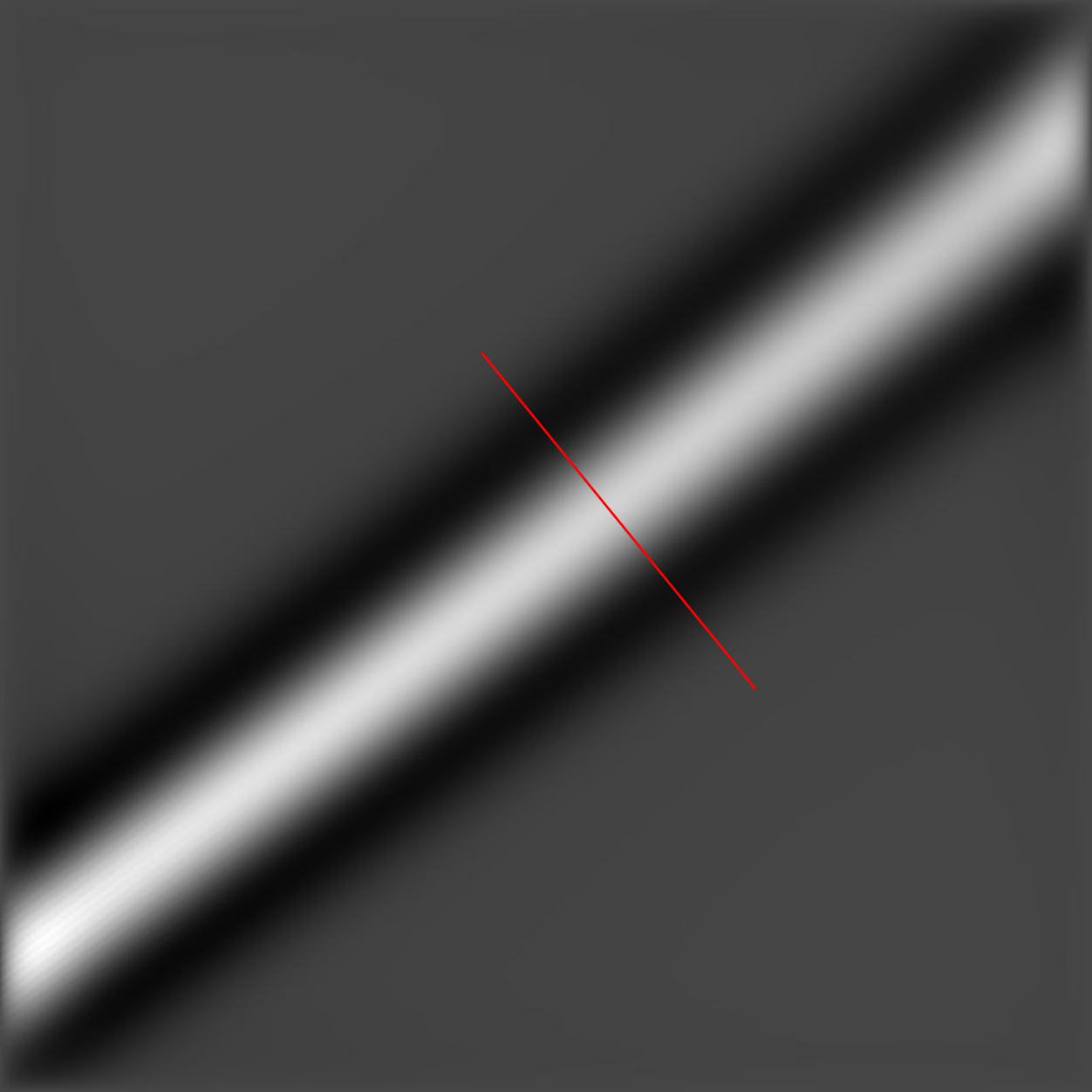}, width=4.5cm}}
{\epsfig{file={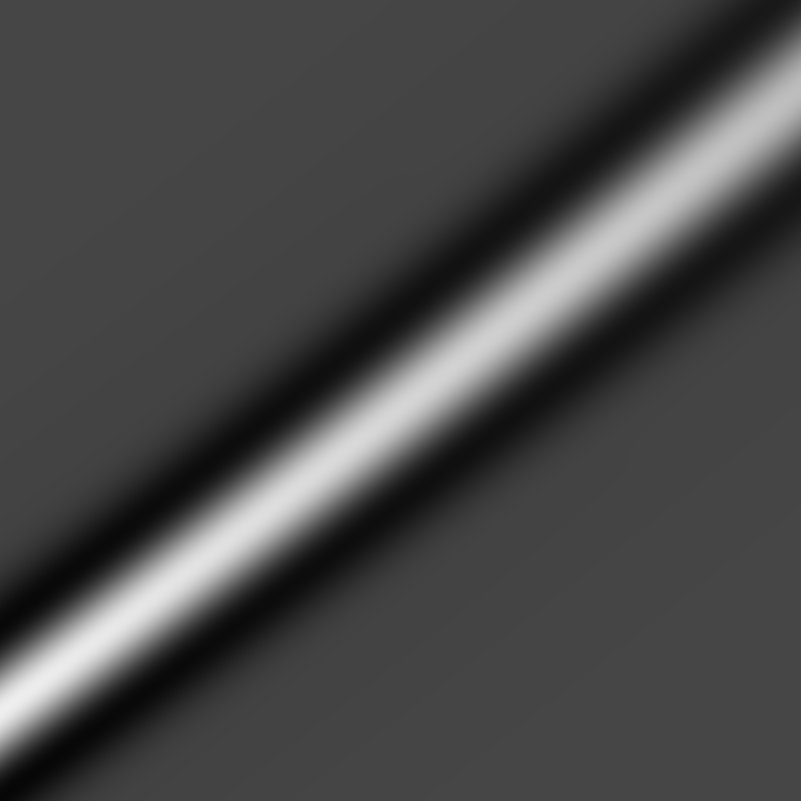}, width=4.5cm}}
}}
\caption{Left: iteratively reconstructed $\nrec(x)$ in the square $\us_b^\prime = [1,1.4]\times[0.5,0.9]$. Right: $G(x_0,\al_{j_1},(|x - R\vec{\al}_{j_1}| - \rho_{j_2})/\e)$ for $x \in \us_b^\prime$, computed using \eqref{G 2nd}.}
\label{fig:Gfn_loc}
\end{figure}

The results are shown in Figures~\ref{fig:Gfn_loc}, \ref{fig:Gfn_diff}, and \ref{fig:line_plots}. The left panel of Figure~\ref{fig:Gfn_loc} shows the density plot of the iteratively reconstructed $\nrec(x)$ for $x\in\us_b^\prime$, while the right panel displays the function $G(x_0,\al_{j_1},(|x-R\vec\al_{j_1}|-\rho_{j_2})/\e)$, $x\in\us_b^\prime$, computed using \eqref{G 2nd}, over the same domain. Both panels use a display window of $[-0.02,0.06]$. Figure~\ref{fig:Gfn_diff} shows the difference between these two images, with a tighter display window of $[-5\cdot10^{-3},5\cdot10^{-3}]$. Consistent with the lemma, the difference is negligible except near the boundary of $\us_b^\prime$. To better illustrate the agreement, Figure~\ref{fig:line_plots} plots cross-sections of the three images along the red line shown in the left panel of Figure~\ref{fig:Gfn_loc}. 

\begin{figure}[h]
{\centerline{
{\epsfig{file={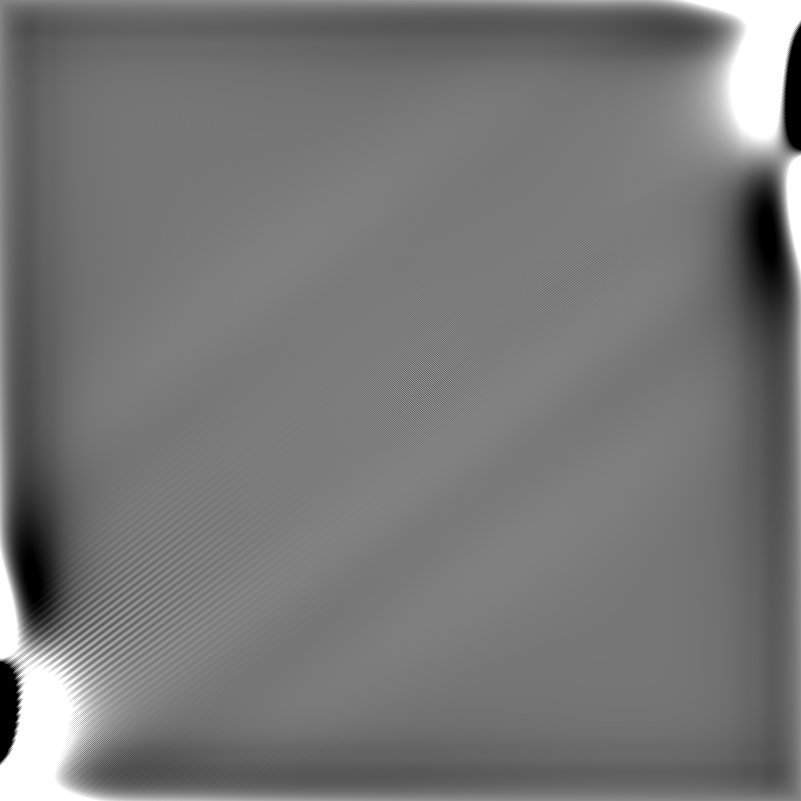}, width=4.5cm}}
}}
\caption{Difference between the two images shown in Figure~\ref{fig:Gfn_loc}, displayed with the window $[-5\cdot10^{-3},5\cdot10^{-3}]$.}
\label{fig:Gfn_diff}
\end{figure}

\begin{figure}[h]
{\centerline{
{\epsfig{file={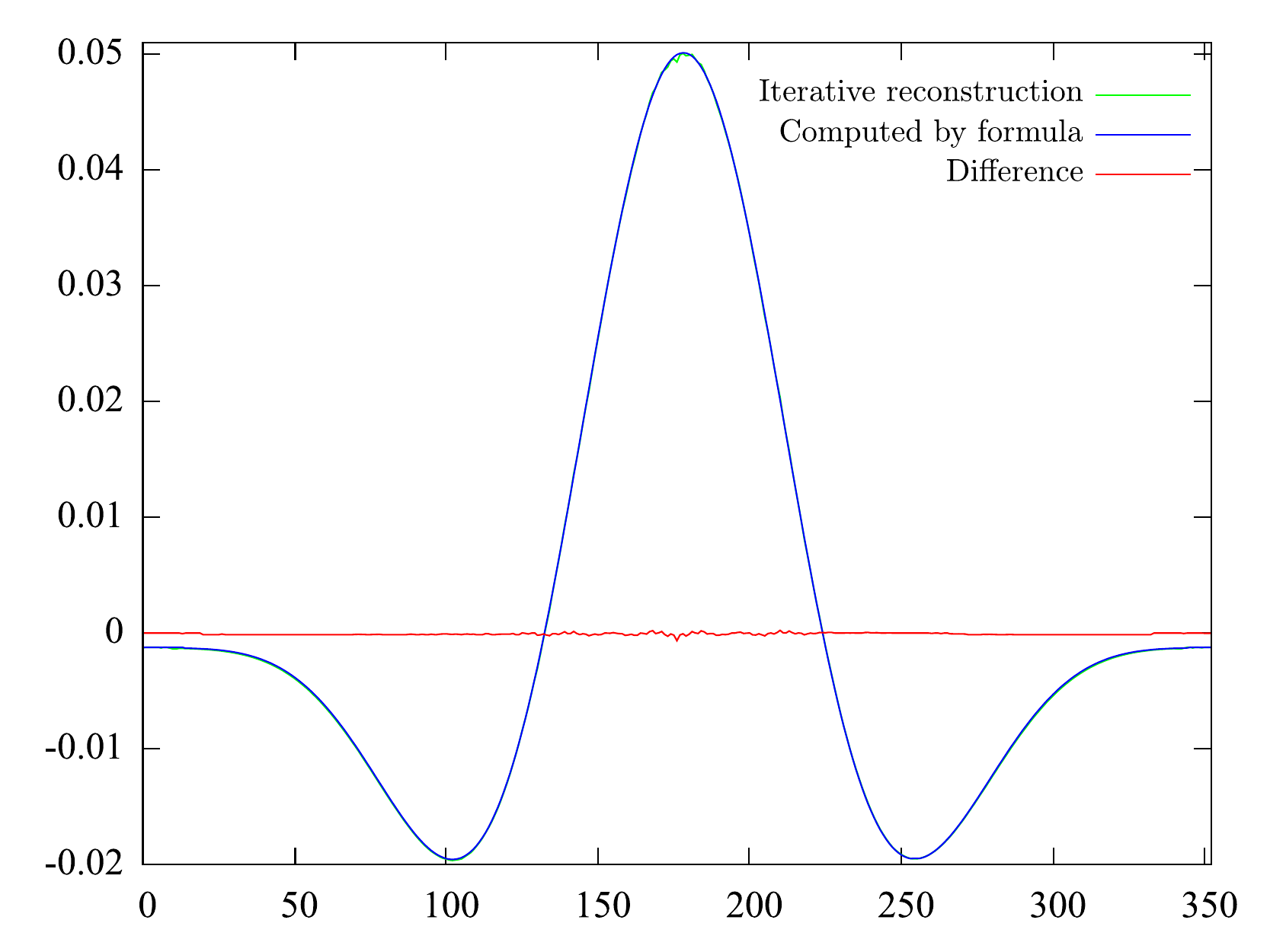}, width=10cm}}
}}
\caption{Cross-sections of the three images from Figures~\ref{fig:Gfn_loc} and \ref{fig:Gfn_diff} along the red line indicated in the left panel of Figure~\ref{fig:Gfn_loc}.}
\label{fig:line_plots}
\end{figure}

To validate the predicted covariance function $C(\chx)$ in \eqref{Cov main}, we simulate a complete data set with noisy measurements of a test function $f$. We choose $f$ to be the characteristic function of the disk centered at $x_c=(1,1)$ with radius $r=2$.  The center of a local region of interest (ROI) is the same point $x_0=(1.2,0.7)$ as before. Then we simulate 1500 noisy data sets. For each data set, the noise values $\on_j$ are drawn from the uniform distribution on $[-1,1]$, which has a standard deviation of $1/3$, and then scaled by $(3\Delta\al)^{1/2}\sigma(y_j)$, where
\be
\sigma(y)=(1 + 0.5\sin(2\al))(1 - 0.4\cos\rho)\chi_{\tsp(\R f)}(y).
\ee
Here $\chi_{\tsp(\R f)}(y)$ is the characteristic function of the support of $\R f$. This choice ensures that Assumptions~\ref{ass:noise} and \ref{ass:x0}(\ref{sig cont},\ref{Ial_aux}) are satisfied. 

In the spirit of conventional X-ray CT, if the curve $\s_{y_j}$ does not intersect the disk, the corresponding noise value $\on_j$ is set to zero.  Physically, in X-ray CT a beam passing only through air experiences negligible attenuation, so its associated noise level is much smaller than that of beams traversing the object and undergoing substantial attenuation. The discontinuity of $\sigma(y)$ at the boundary of the projected disk in the data domain is not problematic, since this discontinuity remains at a positive distance from $\Gamma_{x_0}$ (cf. Assumption~\ref{ass:x0}\eqref{sig cont}). 

To reduce computation time, the data interpolation grid was reduced to $N_\al^\prime=1200$, $N_\rho^\prime=1801$. The entire original $\us_b=[-R_{rec},R_{rec}]^2$ was reconstructed on an $801\times 801$ grid. We recorded the reconstructed values at the center $x_0$ and at a nearby point $x_1=x_0+\e\chx$, where $\chx=(1.24,-1.77)$. The sample variances at these points are 0.042 and 0.039, respectively, compared to the predicted value of 0.043. Figure~\ref{fig:histograms} shows the sample and predicted histograms, which agree well. The sample covariance between $\nrec(x_0)$ and $\nrec(x_1)$ is $0.024$, while the predicted value, computed using \eqref{Cov main}, is $C(\chx)=0.027$.

\begin{figure}[h]
{\centerline{
{\epsfig{file={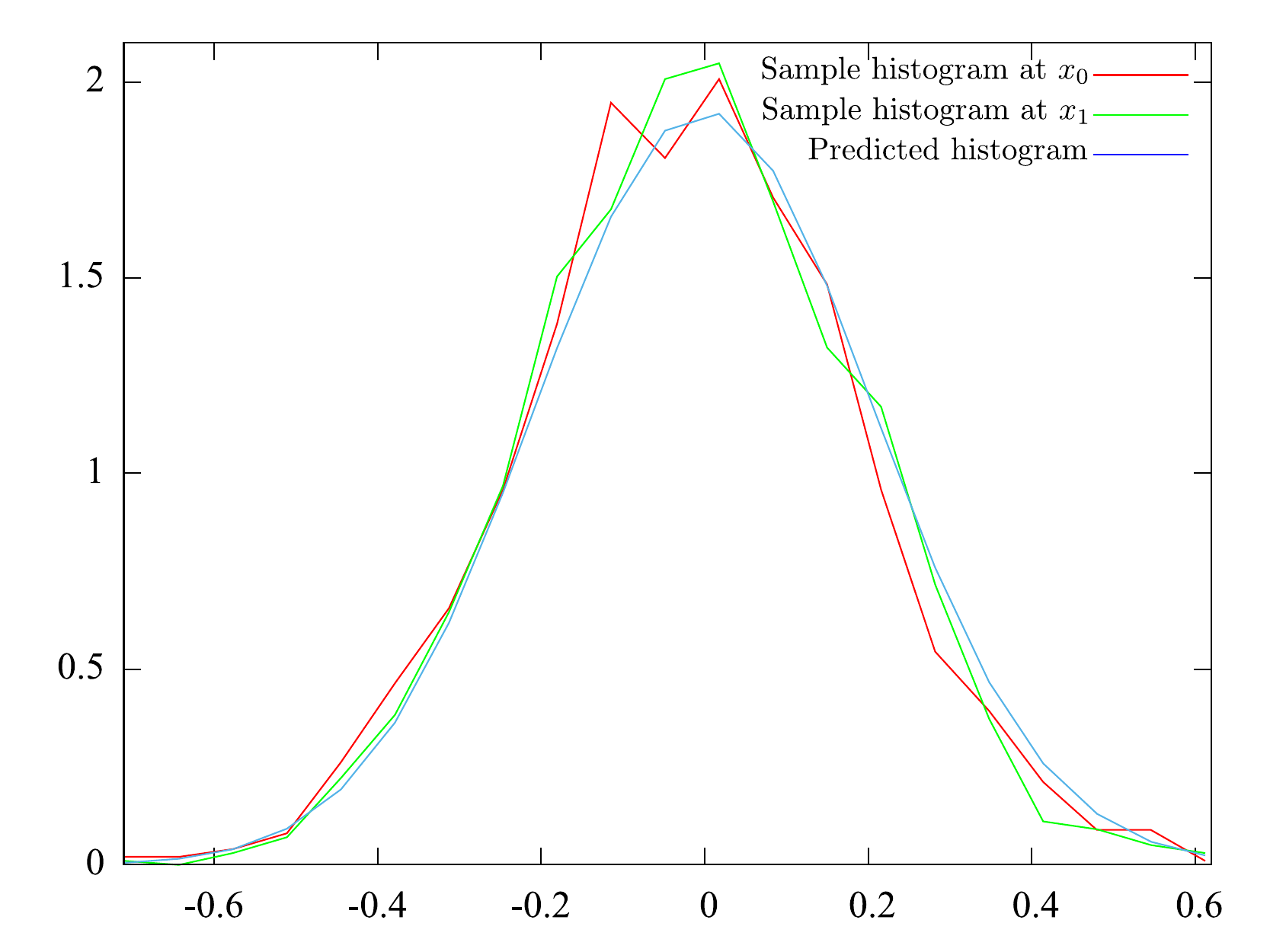}, width=10cm}}
}}
\caption{Sample histograms of the computed values $\nrec(x_0)$ (red) and $\nrec(x_1)$ (green), along with the predicted histogram (blue), which does not depend on the point.}
\label{fig:histograms}
\end{figure}

\appendix

\section{Proof of Lemma~\ref{lem:eta props}}\label{sec:eta props}

Since each $\eta_j$ is a function of only one of the independent variables $\on_j$, the variables $\eta_j$ are also independent. For simplicity, we drop the subscript $j$ for the remainder of the proof and denote $\hat f:=\hat f(y_j)$. 

If $\on=0$, then $|\hat f|\le C$ implies that the corresponding $g$ is not replaced with zero. In this case, $\eta=\on=0$.

Let $\pi$ be the probability measure of $\on$. By construction,
\bs\label{exp diff}
\Eb|\eta-\on|&\le \int_{|x+\hat f|\ge 2C}|x+\hat f|\dd \pi(x)
\le 2\int_{|x|\ge C} |x|\dd \pi(x)\\
&\le \frac2{C^2}\int_{|x|\ge C} |x|^3\dd \pi(x)\le  \frac2{C^2}\Eb|\on|^3=O(\e^{3/2}).
\es
Using the triangle inequality and that $\Eb\on=0$ proves the claim about $\Eb\eta$. Arguing similarly we obtain
\bs\label{exp diff 2}
\Eb|\eta^2-\on^2|&\le \int_{|x+\hat f|\ge 2C}x^2\dd \pi(x)
\le \int_{|x|\ge C} x^2\dd \pi(x)\\
&\le \frac1{C}\int_{|x|\ge C} |x|^3\dd \pi(x)=O(\e^{3/2}).
\es
In the first inequality we used that $|\on^2-\eta^2|=|x^2-(-f)^2|\le x^2$ on the domain of integration. Combining with \eqref{noise var} proves the claim about $\Eb\eta_j^2$.


Finally,
\be\label{diff3 expanded}
|\eta-\bar\eta|^3\le c \big(|\eta-\on|^3+|\on-\bar\eta|^3\big)\le c\big(|\eta-\on|^3+|\on|^3+|\bar\eta|^3\big)
\ee
Similarly to \eqref{exp diff},
\bs\label{exp diff 3}
\Eb|\eta-\on|^3&\le \int_{|x+\hat f|\ge 2C}|x+\hat f|^3\dd \pi(x)
\le 8\int_{|x|\ge C} |x|^3\dd \pi(x)\\
&\le \frac8C\int_{|x|\ge C} x^4\dd \pi(x)\le  \frac8C\Eb\on^4=O(\e^{3/2}).
\es
Taking the expectation on both sides of \eqref{diff3 expanded} and using the previously obtained results and \eqref{noise var} we prove the last claim of the lemma
\be\label{exp diff3 res}
\Eb|\eta-\bar\eta|^3=O(\e^{3/2}).
\ee

\section{Proof of Lemma~\ref{lem:Feps approx}}\label{sec:Feps}

In view of the first line in \eqref{soln 1}, we introduce the function
\bs\label{get lead term v1}
F_\e(x)&:=\big(\op(\tb_\e(x,\xi))\chi(z)\R^*g_\e\big)(x).
\es
Throughout this section, we assume that $x\in\us_0$.

\subsection{Replacing $\tb_\e$ with a simpler amplitude in \eqref{get lead term v1}}\label{ssec:glob anal}
In view of \eqref{get lead term v1}, \eqref{reg main eq}, \eqref{R*R D0}, introduce the symbols
\be\label{B0pr}
\tb_0^\prime(x,\xi):=\frac{|\xi|}{\tilde Q_0(x,\xi)+\kappa\e^3|\xi|^3},\
\Delta\tb_\e(x,\xi):=\tb_\e(x,\xi)-\tb_0^\prime(x,\xi). 
\ee
As is easily seen from \eqref{R*R D0} and the parametrix construction \cite[Appendix to Section I.4]{trev1}, $\Delta\tb_\e\in S^0(\us)$ with seminorms uniformly bounded for $0<\e\ll1$.

Using \eqref{interp-data}, \eqref{interp-noise}, \eqref{Radj def} and \eqref{get lead term v1}, define
\bs\label{glt 1}
J_\e(x;y_j)&:=\frac{\e\mu}{(2\pi)^2}\int_{\br^2}\int_{\us}\tb_\e(x,\xi)\chi(z)e^{i\xi\cdot(z-x)}\\
&\hspace{2cm}\times\int_{\br} W(z,(\al,p))
\ik_p\bigg(\frac{p-p_{j_2}}\e\bigg)\ik_\al(\cha)\dd \cha\dd z\dd\xi,\\ p&=\Phi(z,\al),\ \al=\al_{j_1}+\e\mu\cha.
\es
The factor $\e\mu$ appears due to the change of variables $\al\to\cha=(\al-\al_{j_1})/(\e\mu)$. 
Similarly, define
\bs\label{glt 2}
\Delta J_\e(x;y_j)&:=\frac{\e\mu}{(2\pi)^2}\int_{\br^2}\int_{\us}\Delta\tb_\e(x,\xi)\chi(z)e^{i\xi\cdot(z-x)}\\
&\hspace{1cm}\times\int_{\br} W(z,(\al,p))
\ik_p\bigg(\frac{p-p_{j_2}}\e\bigg)\ik_\al(\cha)\dd \cha\dd z\dd\xi.
\es

\begin{lemma}\label{lem:del J bnd}
Under the assumptions of Theorem~\ref{thm:Lyapunov_nd}, one has $|\Delta J_\e(x;y_j)|\le c\e$, $x\in\us_0$, $y_j\in\vs$,\ $0<\e\ll1$.
\end{lemma}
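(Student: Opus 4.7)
The strategy is to expose the PDO structure of $\Delta J_\e(x;y_j)$ in \eqref{glt 2}, then use the Fourier representation of $\ik_p$ to trade the rapid variation of $\ik_p((\Phi(z,\al)-p_{j_2})/\e)$ for an oscillating exponential, and finally invoke the standard action of an $S^0$-pseudodifferential operator on such an exponential. Writing $\al=\al_{j_1}+\e\mu\cha$ and setting
\begin{equation*}
H_\e(z;y_j):=\int_\br W(z,(\al,\Phi(z,\al)))\,\ik_p\!\big((\Phi(z,\al)-p_{j_2})/\e\big)\,\ik_\al(\cha)\dd\cha,
\end{equation*}
one sees at once that \eqref{glt 2} reduces to $\Delta J_\e(x;y_j)=\e\mu\,\op(\Delta\tb_\e)\!\big[\chi(\cdot)H_\e(\cdot;y_j)\big](x)$. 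Since the prefactor $\e\mu$ already carries the desired $\e$, it is enough to prove the uniform pointwise bound $|\op(\Delta\tb_\e)[\chi H_\e(\cdot;y_j)](x)|\le c$ for $x\in\us_0$, $y_j\in\vs$, $0<\e\ll1$.

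A direct Sobolev-embedding argument will not succeed because $H_\e$ has $k$-th order derivatives of size $\e^{-k}$. Instead, I would substitute $\ik_p(t)=(2\pi)^{-1}\int\tilde\ik_p(\la)e^{-i\la t}\dd\la$ and interchange integrations to get
\begin{equation*}
\op(\Delta\tb_\e)[\chi H_\e(\cdot;y_j)](x)=\frac{1}{2\pi}\int_\br\tilde\ik_p(\la)\,e^{i\la p_{j_2}/\e}\!\int_\br\mathcal L_\e(x,\la,\cha)\,\ik_\al(\cha)\dd\cha\,\dd\la,
\end{equation*}
where $\mathcal L_\e(x,\la,\cha):=\op(\Delta\tb_\e)\!\big[\chi(\cdot)W(\cdot,(\al,\Phi(\cdot,\al)))\,e^{-i\la\Phi(\cdot,\al)/\e}\big](x)$. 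The problem is thus reduced to showing $|\mathcal L_\e(x,\la,\cha)|\le c$ uniformly, together with uniform convergence of the outer $(\la,\cha)$-integral.

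By Assumption~\ref{geom GRT}\eqref{li}, $\dd_z\Phi(z,\al)\neq 0$, so the phase $-\la\Phi(z,\al)/\e$ has non-vanishing $z$-gradient whenever $\la\neq 0$. The standard semiclassical expansion for a PDO acting on an oscillating exponential (leading principal-symbol term plus an integration-by-parts remainder controlled by the seminorms of $\Delta\tb_\e$) yields
\begin{equation*}
\mathcal L_\e(x,\la,\cha)=\chi(x)W(x,(\al,\Phi(x,\al)))\,\Delta\tb_\e(x,-\la\dd_x\Phi(x,\al)/\e)\,e^{-i\la\Phi(x,\al)/\e}+R_\e,
\end{equation*}
with $R_\e$ bounded uniformly in $(\la,\cha,\e)$. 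The principal term is $O(1)$ since $\Delta\tb_\e\in S^0$ with uniform seminorms, so $|\Delta\tb_\e(x,\xi)|\le c$ for $|\xi|\ge 1$; the small-$|\xi|$ regime (which arises when $|\la|\ll\e$) is absorbed by the integrable bound $|\xi|^{-n+\de}$ of \eqref{symbols def}. Combined with the decay $|\tilde\ik_p(\la)|\le c(1+|\la|)^{-3}$ coming from $\ik_p^{(3)}\in L^\infty$ (Assumption~\ref{ass:interp ker}\eqref{ikcont}) and the compact support of $\ik_\al$, the outer integral converges uniformly, completing the argument after multiplying by $\e\mu$.

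\textbf{Main obstacle.} The delicate point is the uniform control of the remainder $R_\e$ across the entire range of $\la$: the PDO probes $\Delta\tb_\e$ at frequency $\xi=-\la\dd_x\Phi(x,\al)/\e$, so as $|\la|$ varies from $O(\e)$ to $O(1)$ and beyond, $|\xi|$ sweeps from $O(1)$ to $O(\e^{-1})$ and larger. The analysis must combine the uniform $S^0$-seminorm bounds on $\Delta\tb_\e$, a careful integration by parts in $z$ against the non-vanishing gradient $\dd_z\Phi$, and the decay of $\tilde\ik_p$ to close the estimate.
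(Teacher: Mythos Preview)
Your route via the Fourier representation of $\ik_p$ and the action of $\op(\Delta\tb_\e)$ on the oscillating exponentials $\chi W e^{-i\la\Phi/\e}$ is genuinely different from the paper's proof. The paper never expands $\ik_p$ in Fourier modes; instead it strips off the $\cha$-integral, localizes $z$ near a point $z_0\in\s_{y_j}$ by a partition of unity, and then performs a change of variables $z=\psi(w)$ so that $\Phi(z,\al)-p_{j_2}\equiv w_1$. After rescaling $w_1\to\e\chw_1$ and the dual variable $\eta_1\to\heta_1/\e$, one is left with an explicit double integral in $(\heta_1,\eta_2)$ whose integrand is controlled by the $S^0$ bounds on the transformed symbol, integration by parts in $w_2$, and $\tilde\ik_p\in L^1(\br)$. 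The advantage of the paper's route is that it is entirely self-contained and avoids quoting any ``PDO on an oscillating exponential'' theorem, which in your setting is slightly nonstandard because $\op(\Delta\tb_\e)$ is a \emph{classical} quantization while the exponent carries the small parameter.

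Your approach can be made to work, but there is a genuine gap in the small-$\la$ regime. The formula $\mathcal L_\e=\Delta\tb_\e(x,-\la\dd_x\Phi/\e)\,\chi W\,e^{-i\la\Phi/\e}+R_\e$ is only meaningful when the oscillation is fast, i.e.\ when $|\la|/\e$ is large; for $|\la|\lesssim\e$ there is no semiclassical expansion to invoke, and your claim that the resulting small-$|\xi|$ singularity is ``absorbed by the integrable bound $|\xi|^{-n+\de}$'' fails: that bound is integrable over $\xi\in\br^2$, but here $\xi=-\la\dd_x\Phi/\e$ moves along a one-dimensional line as $\la$ varies, and $\int_0^{c}s^{-2+\de}\dd s$ diverges for $\de\le 1$. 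The fix is simple but different from what you wrote: for $|\la|\le M\e$ the function $\chi W e^{-i\la\Phi/\e}$ lies in a bounded set of $\coi(\us)$ (all derivatives uniformly bounded), so $\op(\Delta\tb_\e)$ applied to it is uniformly bounded by the continuity of an $S^0$ operator on $\coi$; then $\int_{|\la|\le M\e}|\tilde\ik_p(\la)|\dd\la=O(\e)$, which more than suffices. With this repair in place, and once you have made the uniform-in-$\la$ remainder estimate for $R_\e$ precise (your ``main obstacle''), your argument closes.
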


\begin{proof}
Our goal is to obtain a bound for $\Delta J_\e(x;y_j)$ as $\e\to0$. To this end we first omit the integral with respect to $\cha$ in \eqref{glt 2} and drop the prefactor:
\bs\label{glt 3}
\Delta H_\e(x;\cha,y_j):=&\int_{\us}\int_{\br^2}\Delta\tb_\e(x,\xi) e^{i\xi\cdot(z-x)}\dd\xi\\ 
&\hspace{1cm}\times W_1(z,\cha)\ik_p\bigg(\frac{\Phi(z,\al)-p_{j_2}}\e\bigg)\dd z,\\
W_1(z,\cha):=&\chi(z) W(z,(\al,p)).
\es
The integral with respect to $\xi$ is not absolutely convergent and understood in the sense of distributions. In \eqref{glt 2} and \eqref{glt 3}, $p$ and $\al$ are the same as in \eqref{glt 1}. For convenience, in the rest of the proof we drop the dependence of various quantities on $\cha$ and $y_j$. By construction, $\al-\al_{j_1}=O(\e)$ in \eqref{glt 3}. In simplified notation, \eqref{glt 3} becomes
\be
\Delta H_\e(x)=\int_{\us}\int_{\br^2}\Delta\tb_\e(x,\xi) e^{i\xi\cdot(z-x)}\dd\xi\,
W_1(z)\ik_p\bigg(\frac{\Phi(z)-p_{j_2}}\e\bigg)\dd z.
\ee

Pick any point $z_0\in\s_{y_j}$, i.e. so that $\Phi(z_0,\al_{j_1})=p_{j_2}$. Using that $\ik_p$ is compactly supported and appealing to a partition of unity argument, we can assume without loss of generality that $\tsp W_1\subset\us_1$, where $\us_1$ is a small neighborhood of $z_0$. 

Pick any $x\in\us_0$ such that $\text{dist}(x,\us_1)\ge c_0>0$ for a sufficiently small $c_0$. Therefore, $|x-z|$, $z\in\us_1$, is bounded away from zero. Integrate by parts with respect to $\xi$ multiple times and use that $\Delta\tb_\e\in S^0(\us)$ to obtain an integral with respect to $\xi$ that converges absolutely and uniformly in $x$. This implies that $|\Delta H_\e(x)|\le c$ for all $0<\e\ll1$. The constant $c$ can be selected independently of $x\in\us_0$ provided that $\text{dist}(x,\us_1)\ge c_0$. 

Suppose now that $\text{dist}(x,\us_1)\le c_0$. By decreasing $c_0>0$ and shrinking $\us_1$, if necessary, we can assume that $x$ is sufficiently close to $z_0$. Using that $\dd_x\Phi(z)\not=0$, $z\in\us_1$, we can change variables $z=\psi(w)$ so that $\Phi(z)-p_{j_2}\equiv w_1$ and $\det(\pa z/\pa w)$ is bounded. Using \cite[Theorem 3.3, Chapter I]{trev1} and \cite[Theorem 1.5]{egorov1994}, we can transform the expression for $\Delta H_\e$ to the form
\be
\Delta H_\e(x)=\int_{\Xi}\int_{\br^2}\Delta\tb_\e^{(1)}(v,w,\eta) e^{i\eta\cdot(w-v)}\dd\eta\,
W_1(\psi(w))\ik_p(w_1/\e)\dd w
\ee
with some $\Delta\tb_\e^{(1)}\in S^0(\Xi)$, where $\Xi=\psi^{-1}(\us_1)$ and $v=\psi^{-1}(x)$. The absolute value of the Jacobian $|\det(\pa z/\pa w)|$ is included in $\Delta\tb_\e^{(1)}$. Changing variables gives
\bs\label{delHe}
\Delta H_\e(x)=\int_{\br^2}\bigg[\int_{\br^2}&\Delta\tb_\e^{(1)}\big(v,w,(\heta_1/\e,\eta_2)\big) W_1\big(\psi(\e\chw_1,w_2)\big)\\
&\times e^{i\eta_2(w_2-v_2)}\dd w_2\,\ik_p(\chw_1)e^{i\heta_1(\chv_1-\chw_1)}\dd \chw_1\bigg]\dd\heta_1\dd\eta_2.
\es
Using that $|\Delta\tb_\e^{(1)}(v,w,\eta)|\le c$, $v,w\in\Xi$, $\eta\in\br^2$, and that $W_1$, $\ik_p$ are sufficiently smooth and compactly supported, we can show that the integral in brackets decays sufficiently fast as $\heta_1,\eta_2\to\infty$. 

Specifically, the symbol $\Delta\tb_\e^{(1)}\big(v,w,(\heta_1/\e,\eta_2)\big)$ and $W_1$ are smooth in $\chw_1$ and $w_2$. Hence, the expression in brackets is absolutely integrable with respect to $\eta_2$ uniformly with respect to $x\in \us_0$ and $\heta_1\in\br$. To prove decay, it therefore suffices to require that $\tilde\ik_p\in L^1(\br)$, a property that  follows directly from Assumption~\ref{ass:interp ker}\eqref{ikcont}. Indeed, since $\ik_p^\prime\in L^\infty(\br)$ and $\ik_p$ is compactly supported, it follows that $\tilde\ik_p\in L^1(\br)$.

For convenience, the domain of integration with respect to $(\chw_1,w_2)$ is extended to all of $\br^2$. This does not change the value of the integral due to the assumption $\tsp W_1\subset\us_1$. The absolute convergence of the integral with respect to $(\heta_1,\eta_2)$ implies that $|\Delta H_\e(x)|\le c$ for all $x\in\us_0$ close to $z_0$ (if they exist) and $0<\e\ll1$. 

The above argument proves that $|\Delta H_\e(x)|\le c$ for all $x\in\us_0$ and $0<\e\ll1$. Integrating with respect to $\cha$ in \eqref{glt 2} and taking into account the factor $\e$ proves the lemma.
\end{proof}

\subsection{Simplification of the leading order term}\label{ssec: simpl lot}

In view of Lemma~\ref{lem:del J bnd}, replace $\tb_\e$ with $\tb_0^\prime$ in \eqref{glt 1} and change variables to obtain
\bs\label{glt 4}
J_\e(x;y_j):=&\frac{\e^{-2}\mu}{(2\pi)^2}\int_{\us}\int_{\br^2}\tb_0(x,\hxi)\chi(z)e^{i\hxi\cdot(z-x)/\e}\dd\hxi\\
&\hspace{0cm}\times\int_{\br} W(z,(\al,p))
\ik_p\bigg(\frac{\Phi(z,\al)-p_{j_2}}\e\bigg)\ik_\al(\cha)\dd \cha\dd z,\\  
p=&\Phi(z,\al),\ \al=\al_{j_1}+\e\mu\cha,
\es
where we have redefined the meaning of $J_\e$. Recall that $\tb_0$ is defined in \eqref{tb0 def}. As is easily checked, $\e\tb_0^\prime(x,\hxi/\e)=\tb_0(x,\hxi)$.

Introduce the kernel
\be\label{B0 expr}
B_0(x,\chw):=\frac{1}{(2\pi)^2}\int_{\br^2}\tb_0(x,\hxi)e^{-i\hxi\cdot\chw}\dd\hxi.
\ee
Later we will use the following result:
\be\label{B0 bnds}
B_0(x,\chw)=\begin{cases} O(-\ln|\chw|),& |\chw|\to0,\\ 
O(|\chw|^{-3}),& |\chw|\to\infty.
\end{cases}
\ee
The top case follows from \cite[Theorem 5.12]{abels12} because $\tb_0\in S^{-2}(\us)$. The bottom case is proven completely analogously to \cite[Lemma~8.1]{katsevich2025}. 

Using $B_0$, represent $J_\e$ in the form
\bs\label{Je v2}
J_\e(x;y_j)=&\e^{-2}\mu\int_{\us}B_0\bigg(x,\frac{x-z}\e\bigg)\chi(z)\\
&\qquad\times\int_{\br} W(z,(\al,p))
\ik_p\bigg(\frac{\Phi(z,\al)-p_{j_2}}\e\bigg)\ik_\al(\cha)\dd \cha\dd z,\\
p=&\Phi(z,\al),\ \al=\al_{j_1}+\e\mu\cha.
\es
Change variables $z\to\chw=(x-z)/\e$:
\bs\label{Je v2b}
J_\e(x;y_j)=&\mu\int_{\e^{-1}(x-\us)}B_0(x,\chw)\chi(x-\e\chw)\\
&\times\int_{\br} W(x-\e\chw,(\al,p))
\ik_p\bigg(\frac{\Phi(x-\e\chw,\al)-p_{j_2}}\e\bigg)\ik_\al(\cha)\dd \cha\dd\chw,\\
p=&\Phi(x-\e\chw,\al),\ \al=\al_{j_1}+\e\mu\cha.
\es
Using \eqref{B0 bnds} and that the domain of integration with respect to $\chw$ is contained in a ball $|\chw|\le c/\e$ gives
\bs\label{Je v2c}
J_\e(x;y_j)=&\mu W\big(x,(\al_{j_1},\Phi(x,\al_{j_1}))\big)\chi(x)\\
\times\int_{\br^2}&B_0(x,\chw) 
\vartheta\bigg(\frac{\Phi(x,\al_{j_1})-p_{j_2}}\e-\dd_x\Phi(x,\al_{j_1})\chw\bigg)\dd\chw+O(\e\ln(1/\e)).
\es

To obtain \eqref{Je v2c} from \eqref{Je v2b}, we first incorporate the function $\vartheta$, introduced in \eqref{vth def}, by using that 
\bs\label{aux fracs}
&\frac{\Phi(x-\e\chw,\al)-p_{j_2}}\e\\
&=\frac{\Phi(x-\e\chw,\al_{j_1})-p_{j_2}}\e
+\mu\Phi_\al^\prime(x-\e\chw,\al_{j_1})\cha+O(\e)\\
&=\bigg[\frac{\Phi(x,\al_{j_1})-p_{j_2}}\e-\dd_x\Phi(x,\al_{j_1})\chw+O(\e(1+|\chw|))\bigg]
+\mu\Phi_\al^\prime(x,\al_{j_1})\cha.
\es
Ignoring the big-$O$ term in the last line in \eqref{aux fracs} leads to an overall error of $O(\e\ln(1/\e))$ in \eqref{Je v2c}. This follows from the properties $\ik_p^\prime\in L^\infty(\br)$ (see Assumption~\ref{ass:interp ker}\eqref{ikcont}), $B_0(x,\chw)=O(|\chw|^{-3})$, $|\chw|\to\infty$, and the fact that the domain of integration in \eqref{Je v2b} is contained in a ball $|\chw|\le c/\e$.

Then we extend the domain of integration from $\e^{-1}(x-\us)$ to $\br^2$, which contributes a term of magnitude $O(\e)$. This follows from the fact that the set $x-\us$ is a neighborhood of zero, $\vartheta$ is compactly supported in its first argument, $|\dd_x\Phi(x,\al_{j_1})|\ge c$, and $B_0(x,\chw)=O(|\chw|^{-3})$, $|\chw|\to\infty$. To simplify notation, the arguments $x,\al_{j_1}$ of $\vartheta$ are omitted.

Next, suppose $x=x_0+\e\chx$. It is easy to show that the partial derivatives $\pa_{x_j} B_0(x,\chw)$, $j=1,2$, also satisfy \eqref{B0 bnds}. This can be established using the same method of proof as for $B_0$. Using also that $\chi(x)\equiv1$ on $\us_0$ we obtain
\bs\label{Je v2d}
J_\e(x_0+\e\chx;y_j)=&\mu W\big(x_0,(\al_{j_1},\Phi(x_0,\al_{j_1}))\big)\\
\times\int_{\br^2}B_0(x_0,\chw) &
\vartheta\bigg(\frac{\Phi(x_0+\e\chx,\al_{j_1})-p_{j_2}}\e-\dd_x\Phi(x_0,\al_{j_1})\chw\bigg)\dd\chw\\
&+O(\e\ln(1/\e)).
\es
The integral with respect to $\chw$ can be simplified as follows
\bs\label{chw int}
&\int_{\br^2}B_0(x_0,\chw) 
\vartheta\big(\chq-\dd_x\Phi(x_0,\al_{j_1})\chw\big)\dd\chw\\
&=\int K_0\big(\cht;x_0,\al_{j_1}\big)\vartheta(\chq-\cht)\dd \cht,\ \chq=\frac{\Phi(x_0+\e\chx,\al_{j_1})-p_{j_2}}\e,
\es
where
\bs\label{B int ds}
K_0(\cht;x_0,\al):=\int_{\br} B_0\big(x_0,\chw_1 e_1+\chw_2 e_2\big)\dd \chw_2.
\es
Here $e_1=|\dd_x\Phi(x_0,\al)|^{-1}\dd_x\Phi(x_0,\al)$, $e_2$ is a unit vector orthogonal to $e_1$, and $\chw=(\chw_1,\chw_2)$. Using \eqref{B0 expr} shows that $K_0$ satisfies \eqref{K0 ker}.

Clearly,
\be
\frac{\Phi(x_0+\e\chx,\al_{j_1})-p_{j_2}}\e=\frac{\Phi(x_0,\al_{j_1})-p_{j_2}}\e+\dd_x \Phi(x_0,\al_{j_1})\chx+O(\e).
\ee
We therefore approximate $\chq$ in \eqref{chw int} and \eqref{Je v2d} by the right-hand side of the last equation with the term $O(\e)$ neglected. Arguing as above and using that $\vartheta_{\cht}^\prime\in L^\infty(\br)$ in its first argument, we conclude that this approximation introduces an error of magnitude $O(\e)$ in \eqref{Je v2d}.

\subsection{End of proof}\label{ssec:end of prf} Consider now the second and third terms on the right in \eqref{soln 1}. 

The operator $\op\big(\tb_\e(x,\xi)\big)(1-\chi(z)$, $x\in\us_0$, is smoothing. Arguing analogously to the proof of Lemma~\ref{lem:del J bnd} we establish that 
\be
\big(\op\big(\tb_\e(x,\xi)\big)(1-\chi(z))\R^*\ik_\e(\cdot-y_j)=O(\e),\ x\in \us_0,\ 0<\e\ll 1.
\ee

Clearly, $\Vert \ik_\e(\cdot-y_j)\Vert_{L^2(\vs)}=O(\e)$.
Set $h(y)=\ik_\e(y-y_j)$ in \eqref{bdd sol}, and let $\psi_{\e,y_j}\in H_0^1(\us_b)$ be the corresponding solution to \eqref{reg main eq}. The inequality \eqref{bdd sol} implies $\Vert\psi_{\e,y_j}\Vert_{H^{-1/2}(\us)}\le c\e$. By the linearity of \eqref{reg main eq}, $\nrec(x)=\sum_{j}\psi_{\e,y_j}(x)\eta_j$. Applying $\T_\e$ on both sides gives
\be
(\T_\e\nrec)(x)=\tsum_{j\in J} A_{\e,y_j}(x)\eta_j,\ x\in\us_0,
\ee
for some $A_{\e,y}(x)\in C^\infty(\us_0)$ which satisfy $|A_{\e,y}(x)|\le c\e$ uniformly in $x\in K\subset\us_0$, where $K$ is any compact set, $y\in\vs$, and $0<\e\ll1$. This follows because $\T_\e:H_{comp}^{-1/2}(\us)\to H_{loc}^s(\us_0)$ is bounded for any $s\in\br$. Selecting any $s>1$ and using the bound $\Vert f\Vert_{L^\infty(\br^2)}\le c \Vert f\Vert_{H^s(\br^2)}$ \cite[Chapter 4, Proposition 1.3]{taylor2011} gives the desired result. 

Combining all the results, and using \eqref{interp-noise} along with the fact that $p_{j_2}=\e j_2$, we finish the proof.

\section{Proof of Lemma~\ref{lem:Lyapunov_nd}}\label{sec:variance}

We begin with a simple lemma, which is needed in the proof.
\begin{lemma}\label{lem:K0 estims}
One has
\be\label{KeK0bnds}
|K_0(\chq;x,\al)|\le c(1+|\chq|)^{-2},\ \chq\in\br,
\ee
and 
\be\label{K0 der bnd}
\bigg|\pa_{\chq}\int K_0(\chq-\cht;x,\al)\vartheta(\cht;x,\al)\dd \cht\bigg|\le c(1+|\chq|)^{-3},
\ee
for all $x\in\us$ and $\al\in\I_{\al}$.
\end{lemma}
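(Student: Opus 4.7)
The plan is to establish both bounds by Fourier-analytic arguments exploiting the explicit form of $\tb_0(x,\la\dd_x\Phi(x,\al))$ as a function of $\la$. First I would observe that $\tilde Q_0(x,\xi)$ is even in $\xi$: the solutions to \eqref{pxi v2} for $\xi$ and $-\xi$ produce the same set of $\al_k$'s and $|\nu_k|$'s, and the summands in \eqref{R*R D0} depend on $\xi$ only through these quantities. Combined with degree-zero homogeneity, this shows $\tilde Q_0(x,\la\dd_x\Phi(x,\al))$ is constant in $\la\neq 0$, so by \eqref{tb0 def},
\be
g(\la):=\tb_0(x,\la\dd_x\Phi(x,\al))=\frac{A|\la|}{B+C|\la|^3},
\ee
with $A,B,C>0$ bounded above and away from $0$ uniformly in $(x,\al)\in\us\times\I_\al$ by Assumption~\ref{geom GRT}\eqref{li}.

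\textbf{Part 1.} Since $g\in L^1(\br)$ (note $g=O(|\la|)$ near $0$ and $g=O(|\la|^{-2})$ at infinity), we get $|K_0(\chq;x,\al)|\le c$ uniformly, which handles $|\chq|\le 1$. For $|\chq|\ge 1$ I would use evenness of $g$ to rewrite $K_0$ as a cosine integral on $[0,\infty)$ and integrate by parts twice. The first IBP has vanishing boundary term because $g(0)=0$; the second produces a boundary term of size $g'(0^+)/\chq^2=A/(B\chq^2)$, while the remaining integrand involves $g''\in L^1([0,\infty))$ (since $g''=O(|\la|^{-4})$ at infinity). This yields $|K_0(\chq;x,\al)|\le c/\chq^2$ for $|\chq|\ge 1$.

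\textbf{Part 2.} Set $H(\chq):=\int K_0(\chq-\cht;x,\al)\vartheta(\cht;x,\al)\dd\cht$. A direct computation from the definitions gives
\be
H'(\chq)=\frac{-i}{2\pi}\int_\br \la g(\la)\hat\vartheta(\la)e^{-i\la\chq}\dd\la,
\ee
where $\hat\vartheta(\la):=\int\vartheta(\cht;x,\al)e^{i\la\cht}\dd\cht$. By Assumption~\ref{ass:interp ker}\eqref{ikcont}, $\vartheta(\cdot;x,\al)\in C^3_c(\br)$, so $\hat\vartheta$ is smooth with $\hat\vartheta(\la)=O(|\la|^{-3})$ at infinity. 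The integrand $F(\la):=\la g(\la)\hat\vartheta(\la)$ is $O(\la^2)$ near $0$ and $O(|\la|^{-4})$ at infinity, so $F\in L^1(\br)$, giving $|H'(\chq)|\le c$ uniformly (handling $|\chq|\le 1$). For $|\chq|\ge 1$ I would perform three integrations by parts. The key regularity observation is that, although $g\in C^0\setminus C^1$ at $\la=0$, the expansion $\la g(\la)=A\la|\la|/B+O(|\la|^5)$ shows $\la g\in C^1(\br)$ with $(\la g)'(0)=0$, while $(\la g)''$ has a jump of magnitude $4A/B$ at $0$. Consequently $F\in C^1(\br)$ with $F(0)=F'(0)=0$, and $F''$ has a jump of magnitude $4A\hat\vartheta(0)/B$ at $0$. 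The first two IBPs yield no boundary contributions thanks to continuity of $F$ and $F'$ at $0$ and their decay at infinity; in the third IBP, splitting the integral at $0$ produces a boundary term of order $1/|\chq|^3$ from the jump in $F''$, plus a remainder $(i\chq)^{-3}\int F'''(\la)e^{-i\la\chq}\dd\la$ which is bounded uniformly in $\chq$ since $F'''$ (classical derivative away from $0$) is bounded on each side of $0$ and $F'''=O(|\la|^{-4})$ at infinity. This gives $|H'(\chq)|\le c/|\chq|^3$ for $|\chq|\ge 1$.

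\textbf{Main obstacle.} The chief difficulty is the non-smoothness of $g$ at $\la=0$, which prevents a direct three-fold IBP as one would use for a $C^3$ symbol. The key insight is that multiplying by $\la$ improves the regularity by one order ($\la g\in C^1$), so that in Part 2 the required $1/|\chq|^3$ decay emerges precisely from the jump of $(\la g)''$ at the origin rather than from higher-order smoothness. The main technical burden will be the careful bookkeeping of one-sided limits when splitting the third integration by parts at $\la=0$, together with uniformity of all constants as $(x,\al)$ varies over $\us\times\I_\al$.
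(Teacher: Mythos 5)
Your proof is correct and follows essentially the same route as the paper's (much terser) argument: integration by parts in the Fourier integral, using that $\tb_0(x,\la\dd_x\Phi)=O(|\la|)$ near $\la=0$ to control boundary terms and $O(|\la|^{-2})$ decay at infinity for convergence, with the convolution against $\vartheta$ and the extra factor $\la$ supplying the additional order of decay in the second bound. The only negligible imprecision is that Assumption~\ref{ass:interp ker}\eqref{ikcont} gives $\vartheta(\cdot;x,\al)$ compactly supported with $\pa_{\cht}^3\vartheta\in L^\infty$ rather than $C_c^3$, which still yields $\hat\vartheta(\la)=O(|\la|^{-3})$ and does not affect your argument.
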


The bound~\eqref{KeK0bnds} follows from \eqref{K0 ker} by integration by parts and the fact that $\tilde B_0(x,\hxi)=O(|\hxi|)$ as $\hxi\to0$, see \eqref{tb0 def}. The integral is absolutely convergent and $K_0$ is bounded because $\tilde B_0(x,\la\dd_x\Phi(x,\al))=O(\la^{-2})$ as $\la\to\infty$.

The bound~\eqref{K0 der bnd} likewise follows from integration by parts. The decay at infinity is faster due to the extra factor $\la$ in the integrand, which arises in the Fourier domain from the derivative $\pa_{\chq}$. The derivative is bounded due to the convolution with $\vartheta$, which satisfies $\vartheta_{\cht}^\prime\in L^\infty(\br)$ and is compactly supported in the first argument.

In this section, we omit $x_0$ from the arguments of $G$ for simplicity. By \eqref{KeK0bnds} and the fact that $\vartheta$ is compactly supported in $\cht$, we have
\be\label{G bnd}
|G(\al,\chq)|\le c(1+|\chq|)^{-2},\ \al\in\I_\al,\ \chq\in\br.
\ee

The assertion \eqref{exp Fe} follows from \eqref{eta var}, \eqref{Fe v4}, and \eqref{G bnd}.

\subsection{Proof of \eqref{main lim Lnd} in the case $L=1$}
Begin by setting $L=1$ and $\theta_l=1$ in \eqref{recon dttpr 1}. Denote:
\be\label{De def}
D_\e:=\tsum_{j\in J}\text{Var}(\zeta_{\e,j}),
\ee
i.e. $D_\e$ is the expression in brackets in the denominator in \eqref{main lim Lnd}. By \eqref{eta var},
\bs\label{stp 1}
&\sum_{j_2\in J_p}\big|(G(\al,r-j_2)+R_j)^2-G^2(\al,r-j_2)\big|\Eb \eta_j^2\\
&\le c\e\sum_{j_2\in J_p}\big[R_j^2+|G(\al,r-j_2)R_j|\big].
\es
Using \eqref{G bnd} and \eqref{Fe v4},  straightforward calculations show that 
\bs\label{sum R bnd}
\sum_{j_2\in J_p}R_j^2\le c\e\ln^2(1/\e),\ 
\sum_{j_2\in J_p}|G(\al,r-j_2)R_j|\le c\e\ln(1/\e).
\es
Also, by \eqref{eta var}, \eqref{Fe v4}, and \eqref{recon dttpr 1},
\bs\label{zeta exp}
\Eb\zeta_{\e,j}=&\big[G\big(\al_{j_1},r_{j_1}-j_2\big)+O(\e\ln(1/\e))\big]O(\e^{3/2}).
\es
Therefore,
\bs\label{zeta exp bnd 1}
&\tsum_{j\in J} (\Eb\zeta_{\e,j})^2\\
&\le c\e^3\sum_{j\in J} \big[G^2\big(\al_{j_1},r_{j_1}-j_2\big)+\e\ln(1/\e)\big|G\big(\al_{j_1},r_{j_1}-j_2\big)\big|+\e^2\ln^2(1/\e)\big]\\
&\le c\e^3\sum_{j_1\in J_\al} \big[1+\e\ln(1/\e)+\e\ln^2(1/\e)\big]=O(\e^2).
\es
Hence, combining \eqref{stp 1}--\eqref{zeta exp bnd 1} yields
\bs\label{Deps v1}
D_\e=&\tsum_{j\in J}\Eb\zeta_{\e,j}^2-\tsum_{j\in J}(\Eb\zeta_{\e,j})^2\\
=&\Delta\al\sum_{j_1\in J_\al}\bigg[\sum_{j_2\in J_p}G^2(\al_{j_1},r_{j_1}-j_2)\sigma^2(\al_{j_1},p_{j_2})+O(\e\ln^2(1/\e))\bigg]\\
&+O(\e^2)\\
=&\Delta\al\sum_{j_1\in J_\al}\sum_{\substack{j_2\in J_p\\|r_{j_1}-j_2|\le \e^{-1/3}}} G^2(\al_{j_1},r_{j_1}-j_2)\sigma^2(\al_{j_1},p_{j_2})+O(\e\ln^2(1/\e)).
\es
Here we have used that $\sigma$ is bounded and, by \eqref{G bnd}, limiting the sum with respect to $j_2$ to the set $|r_{j_1}-j_2|\le \e^{-1/3}$ leads to an overall error of magnitude $O(\e)$ in\eqref{Deps v1}.


Define 
\bs\label{psi_a_b}
\psi(\al,r):=&\sum_{j_2\in\BZ}  G^2(\al,r-j_2),\ \al\in\I_\al,r\in\br.
\es 
By \eqref{G bnd} the series is absolutely convergent. It is easy to see that
\begin{align}\label{Psi per}
\psi(\al,r+m)=\psi(\al,r),\  \al\in\I_\al,r\in\br,m\in\BZ.
\end{align}
Replace $p_{j_2}$ with $\Phi(x_0,\al_{j_1})$ in the arguments of $\sigma^2$ in \eqref{Deps v1} and extend the sum with respect to $j_2$ to all of $\BZ$ to obtain
\be\label{Deps}
D_\e=\Delta\al\sum_{j_1\in J_\al} \psi(\al_{j_1},r_{j_1})\sigma^2(\al_{j_1},\Phi(x_0,\al_{j_1}))+O(\e\ln^2(1/\e)).
\ee
Indeed, consider the sum with respect to $j_2$. By the definition of $r_{j_1}$ in \eqref{Fe v4} we have
$p_{j_2}-\Phi(x_0,\al_{j_1})=\e(j_2-r_{j_1}+O(1))$. By \eqref{G bnd} and the Lipschitz continuity of $\sigma$ in a neighborhood of the set $\Gamma_{x_0}$ (Assumption~\ref{ass:x0}\eqref{sig cont}):
\bs\label{sigma mod}
&\sum_{\substack{j_2\in J_p\\|r-j_2|\le \e^{-1/3}}} G^2(\al,r-j_2)|\sigma^2(\al,\e j_2)-\sigma^2(y,\Phi(x_0,\al_{j_1}))|\\
&\le c\sum_{|r-j_2|\le \e^{-1/3}} \e(1+|r-j_2|)(1+|r-j_2|)^{-4}=O(\e).
\es
Here we use the fact that all $y_j$ satisfying $|r_{j_1}-j_2|\le \e^{-1/3}$ lie in a neighborhood of $\Gamma_{x_0}$ for all $0<\e\ll1$.

Again, by \eqref{G bnd}, we have 
\be
\sum_{j_2\in\BZ\setminus J_p} G^2(\al,r-j_2)\le c\sum_{|r-j_2|\ge 1/\e} G^2(\al,r-j_2)=O(\e^{-3}),
\ee
where $r=r_{j_1}$. In the first inequality, we used that Assumption~\ref{ass:Phi}\eqref{Phi sm} implies the existence of $\de>0$ such that, for each $(x_0,\al_{j_1})\in\us_0\times\I_\al$, all $p_{j_2}$ satisfying $|\Phi(x_0,\al_{j_1})-p_{j_2}|\le\de$ are in the data, i.e. they belong to $\I_p$. 

Summing over $\al_{j_1}\in\I_\al$ and accounting for the factor $\Delta\al$ in front of the sum in \eqref{Deps v1} proves \eqref{Deps}.

Using Assumption~\ref{ass:x0} and arguments very similar to \cite[Appendix D]{katsevich2024a}, we obtain the following result
\begin{align}\label{lim De}
\lim_{\e \to 0}D_\e&=\int_{\I_\al}\bigg(\int_0^1 \psi(\al,r) \dd r\bigg)\sigma^2(\al,\Phi(x_0,\al)) \dd \al.
\end{align} 
Indeed, the argument in \cite[Appendix D]{katsevich2024a} is based on Assumption 2.8 in that paper, which consists of three items. Our Assumption~\ref{ass:x0}\eqref{Phi prpr} ensures that Assumption 2.8(1) holds, and Assumption~\ref{ass:x0}\eqref{Ial_aux} guarantees Assumption 2.8(2). Finally, Assumption 2.8(3) requires that the equation $\dd_x\Phi(x_0,\al)\cdot\chx=0$ for $\al\in\I_\al$ has a solution set of Lebesgue measure zero for each $\chx\in\br^2\setminus 0_2$. This follows from Assumption~\ref{geom GRT}\eqref{li}. Indeed, if $\dd_x\Phi(x_0,\al)\cdot\chx=0$ for some $\al\in\I_\al$, then $\pa_\al(\dd_x\Phi(x_0,\al)\cdot\chx)\neq 0$, since otherwise the vectors $\dd_x\Phi(x_0,\al)$ and $\dd_x\Phi_\al^\prime(x_0,\al)$ would both be orthogonal to $\chx$ and hence linearly dependent. This contradiction shows that solutions to $\dd_x\Phi(x_0,\al)\cdot\chx=0$ are isolated points.

Furthermore, 
\bs\label{lim psi int}
\int_0^1 \psi(\al,r)\dd r&=\sum_{j_2\in\BZ} \int_0^1G^2(\al,r-j_2)\dd r
=\int_\br G^2(\al,r)\dd r=:H(\al).
\es
By Parseval's theorem and \eqref{K0 ker}, \eqref{G def},
\bs\label{G2int}
&\int_{\br}G^2(\al,r)\dd r \\
&=\frac{\mu^2W^2(x_0,\Phi(x_0,\al))}{2\pi}\int_{\br}|\tb_0(x_0,\la \dd_x\Phi(x_0,\al))\tilde\vartheta(\la;x_0,\al)|^2\dd\la.
\es
Combined with \eqref{lim De}--\eqref{G2int}, Assumption~\ref{ass:x0}(2) yields
\begin{align}\label{A.6}
\lim_{\e \to 0}D_\e=\int_{\I_\al} H(x_0,\al)\sigma^2(\al,\Phi(x_0,\al)) \dd \al>0.
\end{align}

Consider now the numerator in \eqref{main lim Lnd} (with $L=1$). It is bounded by 
\begin{align}\label{A.7}
c\sum_{j\in J}\lvert G(\al_{j_1},r_{j_1}-j_2)+R_j\rvert^3\Eb|\eta_j-\bar\eta_j|^3.
\end{align}
Using the last result in \eqref{eta var} and the estimates for $G$ and $R_j$ we get that the numerator is $O(\e^{1/2})$. Together with \eqref{A.6} this proves Lemma~\ref{lem:Lyapunov_nd} in the case $L=1$.

\subsection{Proof of \eqref{main lim Lnd} in the case $L\ge 1$}\label{sec:prf outline}
The proof of \eqref{main lim Lnd} in the general case $L\ge 1$ is similar to the above, so we only highlight key new points. Instead of \eqref{Deps}, we now have
\bs\label{recon dttpr 2}
D_\e=&\Delta\al\sum_{j\in J} \biggl[\sum_{l=1}^L \theta_l \big(G\big(\al_{j_1},r_{j_1}^{(l)}-j_2\big)+R_j^{(l)}\big)\biggr]^2\sigma^2(\al_{j_1},\Phi(x_0,\al_{j_1}))\\
&+O(\e\ln^2(1/\e)).
\es
See \eqref{recon dttpr 1} for the definitions of $r_{j_1}^{(l)}$ and $R_j^{(l)}$. Here we have used the same substitution $p_{j_2}\approxeq \Phi(x_0,\al_{j_1})$ (cf. \eqref{sigma mod}) and \eqref{eta var}. Estimating the contribution of the remainder $R_j^{(l)}$ similarly to \eqref{sum R bnd} and passing to the limit as in \eqref{lim De} gives
\bs\label{recon dttpr 3}
\lim_{\e\to0}D_\e=&\int_{\I_\al}\int_{\br} f^2(\al,r)\dd r\, \sigma^2(\al,\Phi(x_0,\al))\dd \al,\\
f(\al,r):=&\sum_{l=1}^L  \theta_l G\big(\al,r+\dd_x\Phi(x_0,\al)\cdot\chx_l\big).
\es 
Multiplying out the sum in the definition of $f$ yields 
\bs\label{recon dttpr 4}
\lim_{\e\to0}D_\e=&\sum_{l_1=1}^L\sum_{l_2=1}^L  \theta_{l_1}\theta_{l_2}C(\chx_{l_1}-\chx_{l_2}),\\
C(\chw):=&\int_{\I_\al}(G\star G)\big(\al,\dd_x\Phi(x_0,\al)\cdot \chw\big)\sigma^2(\al,\Phi(x_0,\al))\dd\al.
\es 
Observe that if $L=1$, $\theta_1=1$, and $w=0_2$, then \eqref{recon dttpr 3} coincides with \eqref{lim De}, \eqref{lim psi int} after a change of variables $r+\dd_x\Phi(x_0,\al)\cdot \chx_1\to r$. By \eqref{G def},
\bs\label{f alt}
f(\al,r)=&\frac{\mu W(x_0,\Phi(x_0,\al))}{2\pi}\int_{\br} \tb_0(x_0,\la \dd_x\Phi(x_0,\al))\tilde\vartheta(\la;x_0,\al)\\ &\times\bigg[\sum_{l=1}^L  \theta_l e^{-i\la \dd_x\Phi(x_0,\al)\cdot\chx_l}\bigg]e^{-i\la r}\dd \la.
\es 

Suppose the limit in \eqref{recon dttpr 3} equals zero. By Assumption~\ref{ass:x0}(2), this implies that $f(\al,r)\equiv0$ for all $r\in\br$ and $\al\in I_\al$. The set $I_\al$ is introduced in Assumption~\ref{ass:x0}(2). Furthermore, $\vartheta$ is compactly supported in $\cht$, so $\tilde\vartheta$ is analytic in $\la$ and cannot be zero on an open set. Additionally, $W(x_0,\Phi(x_0,\al))>0$ and $\tb_0(x_0,\la \dd_x\Phi(x_0,\al))\not=0$ (see Assumption~\ref{geom GRT}\eqref{pos W} and \eqref{tb0 def}). Therefore
\bs\label{zero expon}
\sum_{l=1}^L  \theta_l e^{-i\la\dd_x\Phi(x_0,\al)\cdot\chx_l}\equiv 0,\ \la\in\br,\ \al\in I_\al.
\es 
Arguing very similarly to \cite[Appendix E]{katsevich2024a}, we conclude that all $\theta_l$ are zero. For convenience, this argument is presented in Section~\ref{ssec:theta zero}.

Finally, the argument to show that the numerator in \eqref{main lim Lnd} is $O(\e^{1/2})$ is very similar to \eqref{A.7}.

\subsection{Proof of the fact that $\theta_l=0$, $1\le l\le L$}\label{ssec:theta zero}
Consider the vectors $e_k:=\chx_{l_1}-\chx_{l_2}\in\br^2$, $1\le l_1,l_2\le L$, $l_1\not=l_2$, and the sets $I_{\al,k}:=\{\al\in I_\al: \dd_x\Phi(x_0,\al)\cdot e_k=0\}$. The $\chx_l$ are distinct, so $e_k\not=0$ for each $k$. By Assumption~\ref{geom GRT}\eqref{li}, if $\al$ satisfies $\dd_x\Phi(x_0,\al)\cdot e_k=0$, then $\pa_\al(\dd_x\Phi(x_0,\al)\cdot e_k)\not=0$. Hence each set $I_{\al,k}$ is a collection of isolated points. There are finitely many $I_{\al,k}$, so their union is not all of $I_\al$. Therefore there exists $\al_0\in I_\al\setminus(\cup_k I_{\al,k})$ such that $\dd_x\Phi(x_0,\al_0)\cdot e_k\not=0$ for each $k$. 

We found $\al_0\in I_\al$ such that $s_l:=\dd_x\Phi(x_0,\al_0)\cdot\chx_l$, $1\le l\le L$, are pairwise distinct. Differentiate \eqref{zero expon} with respect to $\la$ multiple times and set $\la=0$. This gives
\be\label{equations}
\sum_{l=1}^L  \theta_l s_l^k=0,\ k=0,1,\dots,L-1.
\ee 
Since all $s_l$ are pairwise distinct, using the properties of the Vandermond determinant we conclude from \eqref{equations} that all $\theta_l=0$. This contradiction proves that the limit in \eqref{recon dttpr 3} is not zero unless $\vec\theta=0$. 


\section{Proof of Lemma~\ref{lem:tight}}\label{sec:tight}

To prove the lemma we use results from the theory of Sobolev spaces. Consider the sets 
\be\label{compact sets}
\Gamma_\de:=\{f\in C:\, \Vert f\Vert_{W^{2,2}(D)}^2 \le 1/\de\},
\ee
where $D$ is a bounded Lipschitz domain. We use the Sobolev space $W^{k,p}(D)$, which is the closure of $C^\infty(\bar D)$ in the norm:
\be\label{Wkp norm}
\Vert f\Vert_{k,p}:=\bigg(\int_D\sum_{|m|\le k}|\pa_x f(x)|^p\dd x\bigg)^{1/p},\ m\in\N_0^2,\ f\in C^\infty(\bar D).
\ee

By \cite[eq. (7.8), p. 146 and Theorem 7.26, p. 171]{GilbTrud}, the imbedding $W^{2,2}(D)\hookrightarrow C(\bar D)$ is compact. More precisely, we use here that the imbedding $W^{2,2}(D)\hookrightarrow W^{2,p}(D)$, $1\le p\le 2$ is continuous \cite[eq. (7.8), p. 146]{GilbTrud}, and the imbedding $W^{2,p}(D)\hookrightarrow C(\bar D)$, $1\le p< 2$, is compact \cite[Theorem 7.26, p. 171]{GilbTrud}. Hence the set $\Gamma_\de\subset C$ is compact for every $\de>0$. 

The proof parallels that of Lemma~\ref{lem:Feps approx}. Begin with $F_\e$. Similarly to \eqref{glt 2},
\bs\label{glt 2 v2}
\Delta J_\e(x_0+\e\chx;y_j)&:=\frac{\e\mu}{(2\pi)^2}\int_{\br^2}\int_{\us}\Delta\tb_\e(x_0+\e\chx,\xi)\chi(z)e^{-i\xi\cdot w}\\
&\hspace{1cm}\times\int_{\br} W(z,(\al,p))
\ik_p\bigg(\frac{p-p_{j_2}}\e\bigg)\ik_\al(\cha)\dd \cha\dd w\dd\xi,\\
p=&\Phi(z,\al),\ \al=\al_{j_1}+\e\mu\cha,\ z=x_0+\e\chx-w.
\es
Arguing as in Appendix~\ref{ssec:glob anal}, we prove that $\pa_{\chx}^m\Delta J_\e(x_0+\e\chx;y_j)=O(\e)$, $|m|\le 2$. In particular, differentiating $\ik_p((p-p_{j_2})/\e)$ with respect to $\chx$ does not introduce factors that grow with $\e$, since the $\e$ in the numerator and denominator cancel out.

Based on this argument, the function $\ik_p^{\prime\prime}$ must satisfy the smoothness conditions imposed on $\ik_p$ in Appendix~\ref{sec:Feps}. The most restrictive of these is the requirement $\ik_p^{\prime}\in L^\infty(\br)$ (see, e.g., the paragraph following \eqref{Je v2c}), which in turn implies that $\ik_p^{(3)}\in L^\infty(\br)$. The analog of the other condition from Appendix~\ref{sec:Feps}, namely $\tilde\ik_p\in L^1(\br)$ (see the paragraph following \eqref{delHe}), 
now becomes $\tilde\ik_p^{\prime\prime}\in L^1(\br)$. As before, this requirement is less restrictive and is automatically satisfied because $\ik_p$ is compactly supported and $\ik_p^{(3)}\in L^\infty(\br)$.

Substituting $x=x_0+\e\chx$ into \eqref{Je v2b} and differentiating $J_\e(x_0+\e\chx;y_j)$ with respect to $\chx$, we conclude that the analysis in the remainder of Appendix~\ref{ssec: simpl lot} applies equally to the derivatives. Note that this differentiation does not introduce boundary terms, despite the domain of integration depending on $x$, because $\chi\in\coi(\us)$. Therefore, we can represent $\pa_{\chx}^m J_\e(x_0+\e\chx;y_j)$ in a form analogous to \eqref{Fe v4}. The analog of $G$ has the same decay at infinity, i.e. $O((r_{j_1}-j_2)^{-2})$. This gives
\be
|\pa_{\chx}^mJ_\e(x_0+\e\chx;y_j)|\le c \big[(1+|r_{j_1}-j_2|)^{-2}+\e\ln(1/\e)\big],\ |m|\le 2.
\ee
Recall that $r_{j_1}$ are defined in \eqref{Fe v4}. 

The final contribution to $\nrec$ comes from the difference $\nrec-F_\e$, see \eqref{soln 1} and \eqref{get lead term v1}. The magnitude of its derivatives with respect to $\chx$ can be estimated similarly to section~\ref{ssec:end of prf}.

Combining the results proves that
\bs
\pa_{\chx}^m\nrec(x_0+\e\chx)=&\sum_{j\in J} A_{m,j}(\chx)\eta_j,\\ 
|A_{m,j}(\chx)|\le & c \big[(1+|r_{j_1}-j_2|)^{-2}+\e\ln(1/\e)\big],\ |m|\le 2.
\es
Therefore (cf. \eqref{Deps v1})
\bs\label{var der}
\Eb(\pa_{\chx}^m N_\e^{\text{rec}}(x_0+\e\chx))^2=&c \e\sum_{j\in J} A_{m,j}^2\sigma^2(y_j)
\le c,\  \chx\in D,\ |m|\le2.
\es
As opposed to \eqref{Deps v1}, here we do not need an asymptotic expression but a simple upper bound.
This implies that $\Eb \Vert N_\e^{\text{rec}}(x_0+\e\chx))\Vert_{W^{2,2}(D)}^2\le c_0$ for some $c_0$ and all $0<\e\ll1$.

Let $\pi_\e(x)$ be the probability distribution of the non-negative random variable $\xi_\e=\Vert N_\e^{\text{rec}}(x_0+\e\chx)\Vert_{W^{2,2}(D)}^2$. By an elementary inequality,
\bs\label{cheb}
\BP(N_\e^{\text{rec}}(x_0+\e\chx)\not\in\Gamma_\de)=&\BP\big(\Vert N_\e^{\text{rec}}(x_0+\e\chx)\Vert_{W^{2,2}(D)}^2 \ge 1/\de\big)\\
=&\int_{1/\de}^\infty\dd\pi_\e(x)\le\de\int_{1/\de}^\infty x\dd\pi_\e(x)\le c_0\de.
\es
Therefore $(N_{\e}^{\text{rec}}(x_0+\e\check{x}))$ is a tight family. 

\section*{Acknowledgment}
The author acknowledges using ChatGPT for minor language editing (spelling, grammar, and general style).

\bibliographystyle{siam}
\bibliography{My_Collection}
\end{document}